\numberwithin{equation}{section}
\theoremstyle{plain}
\newtheorem{theorem}{Theorem}[section]
\newtheorem{lemma}[theorem]{Lemma}
\newtheorem{corollary}[theorem]{Corollary}
\theoremstyle{definition}
\newtheorem{problem}[theorem]{Problem}
\newtheorem{remark}[theorem]{Remark}
\newtheorem*{remark*}{Remark}
\DeclareMathOperator{\area}{area}
\DeclareMathOperator{\vol}{vol}
\DeclareMathOperator{\iso}{iso}
\DeclareMathOperator{\im}{image}
\DeclareMathOperator{\tr}{tr}
\DeclareMathOperator{\genus}{genus}
\DeclareMathOperator{\dist}{distance}
\title{A strict inequality for the minimisation of the Willmore functional under isoperimetric constraint}
\author{Andrea Mondino\thanks{University of Oxford. Mathematical Institute. (UK). Email: Andrea.Mondino@maths.ox.ac.uk} \and Christian Scharrer\thanks{University of Warwick. Mathematics Institute. Coventry (UK). Email: C.Scharrer@warwick.ac.uk}}
\begin{document} 
\maketitle

\begin{abstract}
Inspired by previous work of Kusner and Bauer-Kuwert, we prove a strict inequality between the Willmore energies of two surfaces and their connected sum in the context of isoperimetric constraints. Building on previous work by Keller-Mondino-Rivi\`ere, our strict inequality leads to existence of minimisers for the isoperimetric constrained Willmore problem in every genus, provided the minimal energy lies strictly below $8\pi$. Besides the geometric interest, such a minimisation problem has been studied in the literature as a simplified model in the theory of lipid bilayer cell membranes.
\end{abstract}

\section{Introduction}
\label{sec:intro}
The Willmore energy of an immersed closed surface $f: \Sigma \to \mathbb R^3$ is given by
\begin{equation*}
    \mathcal W(f) = \frac{1}{4}\int_{\Sigma} H^2 \, \mathrm d \mu\;,
\end{equation*}
where the mean curvature $H$ is defined as the sum of the principal curvatures, and $\mu$ is the Radon measure corresponding to the pull back metric of the Euclidean metric along~$f$. The isoperimetric ratio is defined by
\begin{equation}\label{eq:defiso_intro}
    \iso(f) = \frac{\area(f)}{\vol(f)^\frac{2}{3}}\;,
\end{equation}
where 
\begin{equation} \label{eq:intro:def_volume}
    \area(f) = \int_{\Sigma}1\,\mathrm d\mu, \qquad \vol(f) = \frac{1}{3}\int_{\Sigma} n \cdot f \,\mathrm d \mu
\end{equation}
are the area and enclosed volume, and $n: \Sigma \to \mathbb S^2$ is the Gau{\ss} map. One can find different definitions of isoperimetric ratio in the literature. Note that with the choice \eqref{eq:defiso_intro}, $\iso(f)$ is invariant under constant scaling of $f$, and it is minimised by any parametrization of the round sphere ${\mathbb S}^2\subset \mathbb R^3$, as a consequence of the Euclidean isoperimetric inequality. Thus
\begin{equation*}
	\im(\iso) = [\sqrt[3]{36\pi},\infty)\; , 
\end{equation*}
where the image is taken over the class of immersed surfaces. Denote with $\mathcal S_g$ the set of smooth immersions $f:\Sigma \to \mathbb R^3$ where $\Sigma$ is a closed surface (i.e. compact without boundary) with $\genus(\Sigma) = g$. We are interested in the following minimisation problem.
\begin{problem}[Isoperimetric constrained Willmore problem] \label{prob:constrained_Willmore}
	Let $g$ be a non-negative integer, and fix $\sigma > \sqrt[3]{36\pi}$. Minimise the Willmore energy $\mathcal W$ in the class of immersions $f \in \mathcal S_g$ subject to the constraint $\iso(f) = \sigma$. 
	That is, find $f_0 \in \{f\in\mathcal S_g: \iso(f)=\sigma\}$ such that 
	\begin{equation}\label{eq:intro:constrained_minimiser}
		\mathcal W(f_0) \leq \mathcal W(f), \quad \text{for any $f\in \mathcal S_g$ with $\iso(f) = \sigma$.}
	\end{equation} 
\end{problem}
Such an immersion $f_0$ satisfying~\eqref{eq:intro:constrained_minimiser} is referred to as \emph{solution} or \emph{minimiser}. 

Beyond the geometric interest, the minimisation problem \ref{prob:constrained_Willmore} is partially motivated by a model for closed lipid bilayer cell membranes  proposed by \textsc{Canham}~\cite{canham1970minimum} and  \textsc{Helfrich}~\cite{helfrich1973elastic}. Indeed, the Willmore energy is the main term in the Canham-Helfrich functional which describes the free energy of a closed lipid bilayer:
$$
F_{\text{Can-Hel}}:=\int_{\text{lipid bilayer}} \left( \frac{k_{c}}{2} (2 H+c_{0})^{2} +\bar{k} K +\lambda \right) +p\cdot V\; ,
$$
where $c_{0}$ is the spontaneous curvature, $k_{c}$ and $\bar{k}$ are bending rigidities, $\lambda$ is the surface tension, $K$ is the Gauss curvature, $p$ is the osmotic pressure and $V$ the enclosed volume.
According to such a model, the shapes of cell membranes observed in nature correspond to  (local) minimisers of $F_{\text{Can-Hel}}$. Notice that, if $c_{0}=\lambda=p=0$ one obtains the Willmore functional (up to a scaling factor and a topological term, by Gauss-Bonnet theorem). If instead $\lambda$ and $p$ do not vanish, they can be seen as Lagrange multipliers for area and volume constraints. Thanks to the scaling invariance of the Willmore functional, such a constrained problem is thus strictly related to the isoperimetric constrained Willmore problem \ref{prob:constrained_Willmore}. 

Even if spherical membranes are most common, also higher genus membranes have been observed in nature: for toroidal shapes see \cite{MB91, MB95a} and for higher genus  see \cite{MBF94, MB95, SL95}.  The Canham-Helfrich and Willmore energies are commonly used in mathematical biology, for instance in modelling red blood cells \cite{canham1970minimum,Mil11}, crista junctions in mitochondria \cite{Ren02}, folds of endoplasmatic reticulum \cite{Shi10}. In particular, the isoperimetric constrained Willmore problem was studied in the axially symmetric case by numerical approximation of the corresponding ordinary differential equations in \cite{seifert1991shape} (see also \cite{L91}). Without symmetry assumptions, the existence of spherical minimisers was achieved  by  \textsc{Schygulla} \cite{MR2928137} and the higher genus case was investigated by \textsc{Keller-Mondino-Rivi\`ere} \cite{MR3176354}. The goal of the paper is to establish the following  (for more details see Corollary \ref{cor:ExMinICW}, later in the introduction):

\begin{theorem}\label{thm:ExMinICW}
Let $g$ be a non-negative integer, and fix  $\sigma > \sqrt[3]{36\pi}$. 
Assume that
\begin{equation} \label{eq:intro:min_iso_constr_energy} 
	 \beta_g(\sigma) := \inf\{\mathcal W(f): f\in \mathcal S_g,\,\iso(f) = \sigma\}<8\pi.
\end{equation}
Then $\beta_g(\sigma)$ is attained by a smoothly embedded minimiser $f_0\in  \mathcal S_g$, i.e. $f_0$ satisfies \eqref{eq:intro:constrained_minimiser}.
\end{theorem}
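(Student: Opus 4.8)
\emph{Proof strategy.} The plan is to argue by the direct method of the calculus of variations, using the strict inequality established below (the core of the paper) to rule out loss of compactness of a minimising sequence. Fix $f_k \in \mathcal S_g$ with $\iso(f_k) = \sigma$ and $\mathcal W(f_k) \to \beta_g(\sigma)$. Since both $\mathcal W$ and $\iso$ are invariant under homotheties of $\mathbb R^3$, I may rescale so that $\area(f_k) = 1$, and then $\vol(f_k) = \sigma^{-3/2}$ as well. Simon's diameter estimate $\operatorname{diam} f_k(\Sigma) \le C\,(\area(f_k)\,\mathcal W(f_k))^{1/2}$ together with a translation confines all the images $f_k(\Sigma)$ to one fixed ball, and since $\beta_g(\sigma) < 8\pi$ we have $\mathcal W(f_k) < 8\pi$ for $k$ large, so by the Li--Yau inequality each $f_k$ is an embedding.

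Next I would invoke the compactness/concentration-compactness theory for surfaces of bounded area and Willmore energy below $8\pi$ -- due to Simon, Kuwert--Sch\"atzle and Rivi\`ere, and adapted to the isoperimetric-constrained problem by Schygulla and by Keller--Mondino--Rivi\`ere. Up to subsequences and reparametrisations this produces a limit configuration: a finite family of smooth closed surfaces $f^{(1)}_\infty, \dots, f^{(m)}_\infty$, each embedded (Li--Yau), organised in a bubble tree, with $\sum_j \genus(f^{(j)}_\infty) \le g$, with $\beta_g(\sigma) = \lim_k \mathcal W(f_k) \ge \sum_j \mathcal W(f^{(j)}_\infty) + 4\pi\,\nu$ where $\nu$ counts the pinched necks (pinching an embedded neck quantises at least $4\pi$ of energy in the limit), and with area and enclosed volume distributed over the pieces, of total at most $1$ and at least $\sigma^{-3/2}$ respectively. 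The hypothesis $\beta_g(\sigma) < 8\pi$ now forbids any round-sphere bubble: such a bubble would contribute $4\pi$ and a pinched neck another $4\pi$, already exhausting the budget; consequently every piece $f^{(j)}_\infty$ appearing in a genuinely degenerate configuration carries strictly more than $4\pi$ of energy, i.e. has the slack over the round sphere that the strict inequality requires of its summands. If instead $m = 1$ and $\nu = 0$, no genus and no area or volume is lost, so $\iso(f^{(1)}_\infty) = \sigma$, and $f_0 := f^{(1)}_\infty$ is the minimiser sought.

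It remains to exclude the degenerate case $\nu \ge 1$. From the embedded pieces $f^{(1)}_\infty,\dots,f^{(m)}_\infty$, of total genus at most $g$ and each with slack over $4\pi$, I reassemble a genus-$g$ embedded surface $\Sigma$ of isoperimetric ratio exactly $\sigma$: this is done by $\nu$ successive connected sums (using the scales of the pieces, plus one small near-round summand to tune $\iso$ to the prescribed value in a way that does not consume the energy budget). Each connected sum strictly lowers the Willmore energy of the reassembly by more than $4\pi$ below the sum of the energies of its parts, by the strict inequality, so $\beta_g(\sigma) \le \mathcal W(\Sigma) < \sum_j \mathcal W(f^{(j)}_\infty) - 4\pi\,\nu$. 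Combined with $\sum_j \mathcal W(f^{(j)}_\infty) \le \beta_g(\sigma) - 4\pi\,\nu$ from the neck analysis, this yields $\beta_g(\sigma) < \beta_g(\sigma) - 8\pi\,\nu$, absurd. Hence $\nu = 0$, no degeneration occurs, and $f_0 := f^{(1)}_\infty$ is a smooth immersion of genus $g$ with $\iso(f_0) = \sigma$ and $\mathcal W(f_0) = \beta_g(\sigma)$. As a constrained minimiser it satisfies the Euler--Lagrange equation of $\mathcal W$ with Lagrange multipliers for the area and volume constraints; the standard elliptic bootstrap for constrained Willmore immersions makes $f_0$ smooth, and $\mathcal W(f_0) = \beta_g(\sigma) < 8\pi$ with the Li--Yau inequality makes it embedded.

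The principal obstacle is the strict inequality itself, together with making the reassembly compatible with the \emph{exact} isoperimetric constraint. Following Kusner and Bauer--Kuwert, one inverts one of the two summands at a point lying on it -- which lowers its Willmore energy by $4\pi$ and opens an asymptotically planar end -- and splices the other summand through that end; the delicate part is to perform this splicing so that the resulting closed surface has Willmore energy \emph{strictly} below the sum of the summand energies diminished by $4\pi$ \emph{while} its area and enclosed volume, hence its isoperimetric ratio, take prescribed values. This calls for sharp M\"obius-geometric estimates and a careful expansion of $\area$ and $\vol$ along the splicing, and is where the bulk of the work lies; the compactness input, by contrast, is essentially that of Keller--Mondino--Rivi\`ere and Schygulla.
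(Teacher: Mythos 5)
Your proposal takes a genuinely different route from the paper. In the paper, Theorem~\ref{thm:ExMinICW} has essentially a one-line proof: invoke the existence result of Keller--Mondino--Rivi\`ere, which produces a smooth embedded minimiser under the compactness hypothesis $\beta_g(\sigma) < \min\{8\pi,\boldsymbol\omega_g,\boldsymbol\beta_g + \beta_0(\sigma)-4\pi\}$; observe that Marques--Neves' bound $\boldsymbol\beta_g \geq 2\pi^2$ makes the $\boldsymbol\omega_g$ term strictly larger than $8\pi$; and apply Corollary~\ref{cor:KMR-inequality} (which follows from Theorem~\ref{thm:connected_sum}, where the real work of the paper lies) to show the third term is also always strictly larger than $\beta_g(\sigma)$. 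Your proposal instead re-derives the compactness/concentration-compactness analysis from scratch, tries to classify the limit as a bubble tree, and rules out degeneration by a reassembly argument using the strict connected-sum inequality. This is morally how the cited compactness theorems are \emph{proved}, so the general strategy is not unreasonable, and you do correctly identify the strict connected-sum inequality under isoperimetric constraint as the decisive new ingredient.

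However, there are genuine gaps in your sketch. The neck energy accounting $\beta_g(\sigma) \geq \sum_j \mathcal W(f^{(j)}_\infty) + 4\pi\nu$ is incorrect: lower semicontinuity of the Willmore energy under varifold convergence gives only $\beta_g(\sigma) \geq \sum_j \mathcal W(f^{(j)}_\infty)$, with no $4\pi\nu$ surplus, and a pinched neck does \emph{not} by itself quantise an extra $4\pi$ (indeed, the connected-sum identity $\mathcal W(M\#N) = \mathcal W(M) + \mathcal W(N) - 4\pi$ shows the energy of a nearly pinched surface can actually lie $4\pi$ \emph{below} the sum of the limit pieces). Consequently your inequality $\sum_j \mathcal W(f^{(j)}_\infty) \leq \beta_g(\sigma) - 4\pi\nu$ is unjustified, and the contradiction $\beta_g(\sigma) < \beta_g(\sigma) - 8\pi\nu$ does not follow. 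The bound one actually uses in the degenerate case is the $\boldsymbol\omega_g$-type lower bound of Simon/Bauer--Kuwert, adapted by Keller--Mondino--Rivi\`ere to involve $\boldsymbol\beta_g + \beta_0(\sigma)-4\pi$ so that the isoperimetric constraint is respected. Your reassembly step also glosses over a real constraint: Theorem~\ref{thm:connected_sum} produces a connected sum whose isoperimetric ratio equals $\iso(f_2)$ for one of the two summands, not an arbitrary prescribed value, so tuning to $\iso = \sigma$ exactly requires either a piece already of the correct isoperimetric ratio or a Schygulla sphere $\mathbb S(\sigma)$; this is precisely what makes $\beta_0(\sigma)$, not $4\pi$, appear in the Keller--Mondino--Rivi\`ere threshold, and what Corollary~\ref{cor:KMR-inequality} is designed to handle. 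In short, the correct and economical route is to use the published compactness theorem as a black box and eliminate its hypotheses, rather than re-prove it with an improvised energy accounting.
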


In the remaining part of the introduction, we set Theorem \ref{thm:ExMinICW} in context  of the existing literature about both the free (i.e. unconstrained) and the isoperimetric constrained Willmore problems. We will start by discussing how  the minimisers for the free problem already provide partial solutions to Problem~\ref{prob:constrained_Willmore}. 

\begin{problem}[Classical Willmore problem] \label{prob:Willmore}
	Let $g$ be a non-negative integer. Minimise the Willmore energy $\mathcal W$ in the class $\mathcal S_g$. That is, find $f_0 \in \mathcal S_g$ such that $\mathcal W(f_0) \leq \mathcal W(f)$ for any $f \in \mathcal S_g$.
\end{problem}
As a first result on the unconstrained problem, \textsc{Willmore}~\cite{MR0202066} showed that the energy now bearing his name is bounded below by $4\pi$ on the class of closed surfaces, with equality only for round spheres, which solves the genus $g=0$ case. Later, \textsc{Simon}~\cite{MR1243525} proved existence of Willmore minimisers with prescribed genus~$g$, provided 
\begin{equation} \label{eq:intro:compactness_assumption}
	\boldsymbol \beta_g := \inf\{\mathcal W(f): f \in \mathcal S_g\} < \min\{8\pi, \boldsymbol\omega_g\},
\end{equation}
where 
\begin{equation}\label{eq:intro:omega}
\boldsymbol \omega_g = \min\left\{4\pi + \sum_{i = 1}^p(\boldsymbol \beta_{g_i} - 4\pi): g = \sum_{i=1}^pg_i,\,1\leq g_i<g\right\}.
\end{equation}
This assumption is used to obtain compactness in the direct method of calculus of variations. It was already known since the work of \textsc{Willmore} that $\boldsymbol \beta_1 \leq 2\pi^2$ and therefore $\boldsymbol \beta_1 < 8\pi$. Moreover, since by definition $\boldsymbol \omega_1 = \infty$, \textsc{Simon} in particular proved existence of Willmore tori (i.e. genus $g=1$ minimisers). \textsc{Kusner}~\cite{MR996204} showed that $\boldsymbol \beta_g < 8\pi$ for all $g \geq 1$, by estimating the area of minimal surfaces in the 3-sphere $\mathbb S^3$ found by \textsc{Lawson}~\cite{MR0270280}. Hence, in order to prove existence of Willmore minimisers with prescribed genus $g \geq 2$, the missing step was to show that 
\begin{equation} \label{eq:intro:Simons_inequality}
	\boldsymbol \beta_g < \boldsymbol \omega_g.
\end{equation} 
There were some suggestions on that inequality before it was finally proven. Namely \textsc{Simon}~\cite{MR1243525} conjectured that $\boldsymbol \beta_g \geq 6\pi$ for all $g \geq 1$ which would imply $\boldsymbol \omega_g > 8\pi$, reducing the compactness assumption in~\eqref{eq:intro:compactness_assumption} to the $8\pi$-bound proven by \textsc{Kusner}. \textsc{Simon}'s conjecture is now known to be true but we will discuss it  later. Furthermore, he explained that the non-strict inequality
\begin{equation} \label{eq:intro:non-strict_Simon_inequality}
	\boldsymbol \beta_g \leq \boldsymbol \omega_g 
\end{equation}
is indeed true. To see this, he suggested to choose $p$ surfaces $f_1 \in \mathcal S_{g_1},\ldots,f_p\in \mathcal S_{g_p}$ with Willmore energies close to $\boldsymbol \beta_{g_1},\ldots,\boldsymbol \beta_{g_p}$, respectively. Then, to each surface $f_i$, apply a sphere inversion
\begin{equation} \label{eq:intro:sphere_inversion}
	I_{a_i}: \mathbb R^3 \setminus\{a_i\} \to \mathbb R^3, \qquad I_{a_i}(x) = \frac{x-a_i}{|x-a_i|^2}\; ,
\end{equation}
for some point $a_i \in \im f_i$ of multiplicity one, turning the surface $f_i$ into an unbounded surface $I_{a_i}\circ f_i$ with a planar end and Willmore energy $\mathcal W(I_{a_i}\circ f_i) = \mathcal W(f_i) - 4\pi$. (In fact, he suggested to choose $a_i$ close to the image of $f_i$ which results in a surface that already looks like a round sphere; the final construction however will look the same). Then, focus on the part of $I_{a_i}\circ f_i$ that carries the genus of $f_i$, cut away the planar end and glue it into a large round sphere. The glueing can be done at small cost in terms of Willmore energy in such a way that the resulting surface looks like a round sphere with a cap of $g_i$ handles, having the same genus as $f_i$ and Willmore energy close to the sum $\mathcal W(\mathbb S^2) + \mathcal W(I_{a_i}\circ f_i)$. Glueing suitable sphere inversions of the surfaces $f_1,\ldots,f_p$ all into the same large sphere, results in a surface $f$ with $\genus(f) = \genus(f_1) + \ldots + \genus(f_p)$ that looks like a round sphere with $p$ caps and Willmore energy    
\begin{equation*}
	\mathcal W(f) \approx \mathcal W(\mathbb S^2) + \sum_{i = 1}^p\mathcal W(I_{a_i}\circ f_i) \approx 4\pi + \sum_{i = 1}^p(\boldsymbol \beta_{g_i} - 4\pi)
\end{equation*}
which indeed implies the non-strict inequality~\eqref{eq:intro:non-strict_Simon_inequality}. In fact, in order to prove either of the inequalities \eqref{eq:intro:Simons_inequality} or \eqref{eq:intro:non-strict_Simon_inequality} one might assume that $p=2$ in the definition of $\boldsymbol\omega_g$ (see Equation~\eqref{eq:intro:omega}). The general case then follows by induction. \textsc{Kusner}~\cite{MR996204} developed the \emph{conformal connected-sum} $M\#N$ of two given immersed surfaces $M$ and $N$ in $\mathbb R^3$ satisfying
\begin{equation*}
	\mathcal W(M \# N) = \mathcal W(M) + \mathcal W(N) - 4\pi,
\end{equation*}
which also implies the non-strict inequality \eqref{eq:intro:non-strict_Simon_inequality}. This kind of equation can be found in many mathematical concepts. Notable for instance is that the same equation holds true for the Euler characteristic $\chi$ of the connected sum $M\#N$ of two $n$-manifolds $M$ and $N$:
\begin{equation*}
	\chi(M\#N)= \chi(M) + \chi(N) - \chi(\mathbb S^n).
\end{equation*} 
Later, \textsc{Kusner}~\cite{MR1417949} suggested to invert the two surfaces at nonumbilic points after which the planar end of each surface is asymptotic to the graph of a biharmonic function with higher order terms decaying at least as fast as $1/r$. Therefore, one can then weld together two such inverted surfaces along a line in their planar ends and estimate the saved energy in terms of the energy of a biharmonic graph. Inspired from this idea, \textsc{Bauer-Kuwert} \cite{MR1941840} finally found a proof for the strict inequality \eqref{eq:intro:Simons_inequality} and thus completed the existence proof of Problem~\ref{prob:Willmore}. Given two smoothly immersed surfaces $f_i:\Sigma_i \to \mathbb R^3$ with $i=1,2$ neither of which is a round sphere, they constructed an immersed surface $f:\Sigma \to \mathbb R^3$ with topological type of the connected sum $\Sigma_1\#\Sigma_2$ by inverting the first surface $f_1$ at a nonumbilic point and glueing the inverted surface directly into a large copy of the second surface, again at a nonumbilic point. The glueing was done by the graph of a biharmonic function. Thereby they inferred
\begin{equation}\label{eq:intro:BK_inequality}
	\mathcal W(f) < \mathcal W(f_1) + \mathcal W(f_2) - 4\pi
\end{equation}
which implies \eqref{eq:intro:Simons_inequality}. 

An alternative way to prove the strict inequality for the high genus case follows from \textsc{Kuwert-Li-Sch\"atzle} \cite{MR2597505}. They proved that $\lim_{g\to\infty}\boldsymbol\beta_g = 8\pi$ which then implies $\lim_{g\to\infty}\boldsymbol\omega_g>8\pi$. Later, existence of Willmore minimisers was also proven by a  parametric approach in independent works of \textsc{Kuwert-Li} \cite{MR2928715} (building on top of previous work of 
\textsc{M\"uller-\v{S}ver\'{a}k} \cite{MR1366547}) and \textsc{Rivi\`ere} \cite{MR3008339} (building on top of \textsc{H\'elein}'s moving frames technique \cite{MR1913803}). In the parametric approach, the inequality in \eqref{eq:intro:compactness_assumption} is needed to obtain compactness in the moduli space of conformal structures over a Riemann surface.
\medskip

We next discuss the connection between the classical Willmore problem \ref{prob:Willmore} and the  isoperimetric constrained Willmore problem \ref{prob:constrained_Willmore}.
Given any smoothly embedded surface $f:\Sigma \to \mathbb R^3$ (such are Willmore minimisers), one can find a smooth curve $\gamma: (0,\infty) \to \mathbb R^3$ with $\im(\gamma)\cap\im(f) = \emptyset$ such that the sphere inversions $I_\gamma\circ f$ (see the definition in \eqref{eq:intro:sphere_inversion}) have the following properties. The isoperimetric ratio $\iso(I_{\gamma(r)}\circ f)$ varies smoothly in $r$,
\begin{equation*}
	\lim_{r \to 0+} \dist(\gamma(r),\im(f)) = 0,\qquad \lim_{r \to \infty} \dist(\gamma(r),\im(f)) = \infty 
\end{equation*}
and, most importantly,
\begin{equation*}
	\lim_{r \to 0+} \iso(I_{\gamma(r)}\circ f) = \iso(\mathbb S^2),\qquad \lim_{r \to \infty} \iso(I_{\gamma(r)}\circ f) = \iso(f). 
\end{equation*}
Given any integer $g \geq 1$ and any (unconstrained) Willmore minimiser $\Sigma_g \in \mathcal S_g$, it follows that for all isoperimetric ratios $\sigma$ in the non-empty interval
\begin{equation} \label{eq:intro:solution_interval}
	(\iso(\mathbb S^2),\iso(\Sigma_g)],
\end{equation}
the isoperimetric constrained Willmore problem \ref{prob:constrained_Willmore} corresponding to $g$ and $\sigma$ has a smoothly embedded solution, given by a suitable sphere inversion of $\Sigma_g$.  In particular, the function $\beta_g:(\sqrt[3]{36\pi},\infty) \to \mathbb R$  (defined in \eqref{eq:intro:min_iso_constr_energy})  is constant on the interval in \eqref{eq:intro:solution_interval} and non-decreasing on the whole domain. 

It follows by the proof of the Willmore conjecture by \textsc{Marques-Neves}~\cite{MR3152944} that
\begin{equation}\label{eq:intro:MN_inequality}
	\boldsymbol \beta_g \geq 2\pi^2 \qquad \text{for all }g\geq1,
\end{equation}
with equality attained only on the Clifford torus (or inversions of it). In particular, $\boldsymbol \beta_g \geq 6\pi$ as conjectured by \textsc{Simon} and thus $\boldsymbol\omega_g > 8\pi$. This fact reduces the compactness assumption in \eqref{eq:intro:compactness_assumption} to the $8\pi$-bound proven by \textsc{Kusner}. Thereby, \textsc{Marques-Neves} provide another proof for the strict inequality \eqref{eq:intro:Simons_inequality} used to show existence of solutions for the classical Willmore problem~\ref{prob:Willmore}, alternatively to the one of \textsc{Bauer-Kuwert}. Thus the interval in \eqref{eq:intro:solution_interval} for $g=1$ reads as
\begin{equation} \label{eq:intro:solution_interval_torus}
	\biggl(\sqrt[3]{36\pi},\textstyle\sqrt[3]{16\sqrt{2}\pi^2}\biggr].
\end{equation}
Notice that recently, \textsc{Rivi\`ere} \cite{riviere2020minmax} gave a PDE based proof of the Willmore conjecture (i.e. Equation~\eqref{eq:intro:MN_inequality}).

The classical Willmore problem only provides solutions for the isoperimetric constrained Willmore problem via sphere inversions for genus $g \geq 1$, as for $g=0$ the minimiser of the Willmore energy coincides with the minimiser of the isoperimetric ratio giving thus an empty interval in \eqref{eq:intro:solution_interval}. The genus $g=0$ case was fully solved by \textsc{Schygulla} \cite{MR2928137} in the ambient approach (and later generalised to the non-zero spontaneous curvature case by the authors \cite{MR4076069} using the parametric approach). We call \emph{Schygulla spheres} the minimisers of the genus $g=0$ case with isoperimetric ratio $\sigma$,  and denote them with $\mathbb S(\sigma)$. 
The first existence result for the genus $g\geq1$ case of the isoperimetric constrained Willmore problem was given by \textsc{Keller-Mondino-Rivi\`ere} \cite{MR3176354}. They proved existence of smoothly embedded minimisers for all isoperimetric ratios $\sigma$ satisfying 
\begin{equation} \label{eq:intro:compactness_iso_constr}
	\beta_g(\sigma) < \min\{8\pi,\boldsymbol\omega_g,(\boldsymbol \beta_g + \beta_0(\sigma)-4\pi)\}.
\end{equation}
Compare this inequality with \textsc{Simon}'s compactness assumption \eqref{eq:intro:compactness_assumption}. \textsc{Schygulla} also showed that $\beta_0(\sigma) = \mathcal W(\mathbb S(\sigma))$ is continuous in $\sigma$. Hence the right hand side in \eqref{eq:intro:compactness_iso_constr} is continuous in $\sigma$. Consequently, by the result of \textsc{Keller-Mondino-Rivi\`ere}, one can show (see \cite[Theorem 1.4]{MR3176354}) that the set of isoperimetric ratios for which there exist constrained minimisers, is an open interval containing the interval in \eqref{eq:intro:solution_interval}.

By the result of \textsc{Marques-Neves}, the constant $\boldsymbol\omega_g$ in the right hand side of \eqref{eq:intro:compactness_iso_constr} is redundant. The main result of this paper will be that also the last constant can be neglected, reducing the compactness assumption \eqref{eq:intro:compactness_iso_constr} to 
\begin{equation*}
	\beta_g(\sigma) < 8\pi.
\end{equation*} 
It is yet unknown whether or not this inequality is always satisfied. Instead, it is clear that the non-strict inequality
\begin{equation*}
	\beta_g(\sigma) \leq 8\pi
\end{equation*}
holds true. This can be seen by taking two concentric spheres of nearly the same radii and connecting them with $(g+1)$ catenoidal necks, resulting in a surface of genus $g$. This construction was carried out by \textsc{K\"{u}hnel-Pinkall} \cite{MR868618}. The isoperimetric ratio of these surfaces tends to infinity, while the Willmore energy approaches $8\pi$ from above.

 We can now state the main result of the present paper.

\begin{theorem} \label{thm:connected_sum}
Suppose $\Sigma_{1}, \Sigma_{2}$ are two closed surfaces, $f_1: \Sigma_1 \to \mathbb R^3$ is a smooth embedding, $f_2: \Sigma_2 \to \mathbb R^3$ is a smooth immersion, and neither $f_1$ nor $f_2$ parametrise a round sphere. Denote with $\Sigma$ the connected sum $\Sigma_1 \# \Sigma_2$. Then there exists a smooth immersion $f:\Sigma \to \mathbb R^3$  such that
	\begin{equation} \label{eq:intro:iso_constraint} 
		\quad \iso(f) = \iso(f_2)
	\end{equation}
	and
	\begin{equation} \label{eq:intro:strict_inequality}
		\mathcal W(f) < \mathcal W(f_1) + \mathcal W(f_2) - 4\pi. 
	\end{equation}
	 Moreover, if also $f_2$ is an embedding, then $f$ is an embedding as well.
\end{theorem}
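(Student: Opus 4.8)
The plan is to transplant the connected-sum construction of Bauer-Kuwert into the isoperimetric setting, and then to restore the exact constraint $\iso(f)=\iso(f_2)$ by a final conformal adjustment, using that $\mathcal W$ is invariant under Möbius transformations carrying a closed surface to a closed surface.

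First I would build the glued surface essentially as Bauer-Kuwert do. Since $f_1$ is not a round sphere it has a nonumbilic point $p_1$; inverting $f_1$ at $p_1$ produces an immersion $\hat f_1$ of $\Sigma_1$ with one point removed, with $\mathcal W(\hat f_1)=\mathcal W(f_1)-4\pi$ and a planar end which, after a rotation, is the graph $x_3=w(x')$ over an exterior plane domain, $w$ being asymptotic to a biharmonic function whose leading (bounded) term has a nontrivial angular $\cos2\theta$ part ($\theta$ the polar angle) with coefficient proportional to the difference $\lambda_1-\lambda_2$ of the principal curvatures of $f_1$ at $p_1$, and with remainder decaying like $1/|x'|$. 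Likewise $f_2$ is not a round sphere, so it has a nonumbilic point $p_2$; take a large homothety $Lf_2$ and write it near $Lp_2$ as a graph over its tangent plane with second fundamental form of size $O(1/L)$. Removing the end of $\hat f_1$ beyond a radius $\rho$ and a disk of radius $\rho$ of $Lf_2$ about $Lp_2$, and interpolating between the two graphs by the Bauer-Kuwert biharmonic gluing, produces a smooth immersion $f_L\colon\Sigma_1\#\Sigma_2\to\mathbb R^3$ with
\[
  \mathcal W(f_L) < \mathcal W(f_1)+\mathcal W(f_2)-4\pi
\]
for Bauer-Kuwert's choice of scales $\rho,L$ (with $\rho,L\to\infty$ and $\rho/L\to0$). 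The gluing takes place in a small nearly flat region, where the bulk of $\hat f_1$ lies on one side of the tangent plane while the rest of $Lf_2$ stays at distance $\sim L$; since $f_1$ is an embedding, if moreover $f_2$ is an embedding then so is $f_L$.

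Next I would control the isoperimetric ratio. Bookkeeping of area and volume through the gluing gives $\area(f_L)=L^2\area(f_2)+o(L^2)$ and $\vol(f_L)=L^3\vol(f_2)+o(L^3)$, hence $\iso(f_L)\to\iso(f_2)$ as $L\to\infty$; more precisely, the disk removed from $Lf_2$ and the planar-end piece of $\hat f_1$ cancel up to the leading order $\pi\rho^2$, while the area excess $\tfrac12\int|\nabla w|^2\sim c\,(\lambda_1-\lambda_2)^2\log\rho$ of the leading biharmonic profile is strictly positive, logarithmically divergent, and therefore dominant — this is where the nonumbilicity of $p_1$ re-enters — so it outweighs the bounded area contributions and the volume change (the latter affecting the ratio only at a strictly lower order). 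Hence $\iso(f_L)>\iso(f_2)$ for all $L$ large. Fixing one such $f_L$, and noting that $f_2$ — hence $f_L$ — is not a round sphere, so that $\iso(f_2)>\sqrt[3]{36\pi}=\iso(\mathbb S^2)$ by the Euclidean isoperimetric inequality, we obtain
\[
  \iso(\mathbb S^2) < \iso(f_2) < \iso(f_L).
\]
Now I would invoke the inversion-path recalled in the introduction: there is a smooth curve $\gamma\colon(0,\infty)\to\mathbb R^3\setminus\im f_L$ (obtained by moving $\gamma(r)$ towards, and then away from, a point of $\im f_L$ of multiplicity one — such a point exists on the embedded $\hat f_1$-part) along which $r\mapsto\iso(I_{\gamma(r)}\circ f_L)$ is continuous with $\lim_{r\to0^+}\iso(I_{\gamma(r)}\circ f_L)=\iso(\mathbb S^2)$ and $\lim_{r\to\infty}\iso(I_{\gamma(r)}\circ f_L)=\iso(f_L)$. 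By the intermediate value theorem there is $r_0$ with $\iso(I_{\gamma(r_0)}\circ f_L)=\iso(f_2)$; set $f:=I_{\gamma(r_0)}\circ f_L$. Since $\gamma(r_0)\notin\im f_L$, the inversion $I_{\gamma(r_0)}$ is a Möbius transformation carrying the compact surface $\im f_L$ to another compact surface, so $f$ is a smooth immersion of $\Sigma_1\#\Sigma_2$ with $\mathcal W(f)=\mathcal W(f_L)<\mathcal W(f_1)+\mathcal W(f_2)-4\pi$, with $\iso(f)=\iso(f_2)$, and $f$ is an embedding whenever $f_L$ is, i.e. whenever $f_2$ is — which completes the argument.

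The main obstacle is the first step: re-running the Bauer-Kuwert interpolation estimate with the homothety $Lf_2$ in place of a fixed surface, verifying the strict sign of the released energy, and doing so for a choice of scales making simultaneously the gluing cost stay strictly below the energy freed at the biharmonic end \emph{and} the net area increment (not merely the isoperimetric defect) be positive. The leverage is that the nonumbilicity of $p_1$ drives both the strict Willmore inequality and the correct sign of the area increment, which is exactly what makes the final intermediate-value argument legitimate; by contrast, checking that the introduction's inversion-path construction persists for the possibly merely immersed surface $f_L$ is routine, since its $\hat f_1$-part is embedded.
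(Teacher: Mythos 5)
Your route is genuinely different from the paper's, and the comparison is instructive. The paper never tries to pin down the \emph{sign} of the isoperimetric defect produced by the Bauer--Kuwert glueing: Lemma~\ref{lem:isoperimetric_asymptotics} only gives $\iso(f)=\iso(f_2)+O_{t,\gamma}(\alpha^{2+1/2})$ with no sign information. Instead it creates a second free parameter by pre-deforming $f_2$ with a compactly supported, volume-preserving normal variation (Lemma~\ref{lem:initial_mcf}) whose effect on $\iso$ is first order in $s$ and on $\mathcal W$ is $O(s)$; choosing $|s|\leq\alpha^m$ with $2<m<2+1/2$ then lets a one-dimensional intermediate-value argument in $s$ restore $\iso(f_{s,\alpha})=\iso(f_2)$ without spoiling the $-c\alpha^2$ Willmore gain. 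You propose to skip the auxiliary variation entirely, prove instead that the BK connected sum has $\iso(f_L)>\iso(f_2)$, and then use the Möbius-invariance of $\mathcal W$ together with the inversion path $r\mapsto I_{\gamma(r)}\circ f_L$ to interpolate between $\iso(\mathbb S^2)$ and $\iso(f_L)$ and hit $\iso(f_2)$ by IVT. If the sign claim holds, this is arguably cleaner (Section~\ref{sec:initial_mcf} becomes superfluous, and the approximation/mollification step at the end of the paper's proof is also replaced by one free conformal move); the price is a sharper analysis of the glued area and volume than the paper needs.

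That sharper analysis is exactly where your argument currently has a real gap. You assert that the area excess, of order $\alpha^2\log(1/\alpha)$ (equivalently $\log\rho$ in your normalisation), is ``dominant'' and that the volume change ``affects the ratio only at a strictly lower order,'' but you offer no reason. The point is delicate: the naive volume contribution of the graph over the planar end is $\tfrac13\int(u^\circ-z\cdot\mathrm D u^\circ)\,\mathrm d\mathcal L^2$, and since the leading profile $p^\circ(z)=\tfrac12P^\circ(z/|z|,z/|z|)$ is homogeneous of degree zero, a dimensional count puts this contribution at exactly the order that would compete with, or beat, the area term. What saves the claim is a cancellation you don't mention: $z\cdot\mathrm Dp^\circ\equiv 0$ by Euler's identity, and the remaining term $p^\circ$ integrates to zero over each circle because $P^\circ$ is trace-free, $\int_0^{2\pi}P^\circ(e^{i\theta},e^{i\theta})\,\mathrm d\theta=0$. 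Only after this trace-free cancellation does the volume correction drop to a strictly lower order and the positive area term $\tfrac12\int|\mathrm Dp^\circ|^2\sim c_1\log\rho$ (with $c_1>0$ precisely because $p_1$ is nonumbilic, $P^\circ\neq0$) become dominant. Similar bookkeeping has to be done for the removed disk of $Lf_2$, for the near-origin annulus where $|\mathrm Du^\circ_\alpha|$ is only $O(1)$, and for the biharmonic interpolation region, and one has to be slightly careful to phrase the ``logarithmic divergence'' in the rescaled sense where all of these corrections go to zero. None of this is impossible — indeed the computation does come out the right way — but it is exactly the step you flag as ``the main obstacle,'' and as it stands the proposal does not carry it out. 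The paper's variational trick is designed precisely to avoid having to settle these signs, so citing Lemma~\ref{lem:isoperimetric_asymptotics} alone would not suffice for your argument. A minor secondary point: the inversion-path construction in the introduction is stated for embedded surfaces; your remark that $f_L$ has an embedded $\hat f_1$-part providing a multiplicity-one point is the right observation, but that extension also deserves a sentence of justification (continuity of $\iso$ along $\gamma$ and the two limiting values) rather than being called routine.
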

Recall that the inequality in \eqref{eq:intro:strict_inequality} has been proven by \textsc{Bauer-Kuwert} \cite{MR1941840} in order to solve the classical Willmore problem (see \eqref{eq:intro:BK_inequality}). It is novel that the same inequality remains valid under the additional condition on the isoperimetric ratios \eqref{eq:intro:iso_constraint}. Indeed, in order to prove Theorem \ref{thm:connected_sum}, we use the same connected sum construction developed by \textsc{Bauer-Kuwert}. It will be shown in Section \ref{sec:Iso_balance} that the connected sum already satisfies Equation \eqref{eq:intro:iso_constraint} asymptotically (see Lemma~\ref{lem:isoperimetric_asymptotics}). We then adjust the isoperimetric ratio by applying a first variation of the surface $f_2$ supported away from the pasting region, inspired by \textsc{Huisken}'s volume preserving mean curvature flow \cite{MR921165}  (see Lemma~\ref{lem:initial_mcf} in Section~\ref{sec:initial_mcf}). Using existence of smoothly embedded Schygulla spheres as well as existence of smoothly embedded Willmore minimisers, we infer the following corollary.

\begin{corollary} \label{cor:KMR-inequality}
	Given any integer $g\geq1$, there holds
	\begin{equation*} \label{eq:intro:KMR-inequality}
		 \beta_g(\sigma) < \boldsymbol \beta_g + \beta_0(\sigma) - 4\pi, \quad \text{ for all $\sigma > \sqrt[3]{36\pi}$.}
	\end{equation*}
\end{corollary}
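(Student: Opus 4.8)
The plan is to derive Corollary \ref{cor:KMR-inequality} as a direct application of Theorem \ref{thm:connected_sum}, exploiting the existence results for Willmore minimisers and Schygulla spheres recalled in the introduction. Fix an integer $g \geq 1$ and an isoperimetric ratio $\sigma > \sqrt[3]{36\pi}$. The natural choice is to take $f_2$ to be (a parametrisation of) a Schygulla sphere $\mathbb S(\sigma)$, so that $\Sigma_2$ has genus $0$, $f_2$ is a smooth embedding with $\iso(f_2) = \sigma$, and $\mathcal W(f_2) = \beta_0(\sigma)$. For $f_1$ one would like to take a smoothly embedded genus-$g$ Willmore minimiser, i.e.\ an element of $\mathcal S_g$ realising $\boldsymbol\beta_g$, which exists by the combined work of \textsc{Simon}, \textsc{Kusner} and \textsc{Marques--Neves} discussed above; then $\mathcal W(f_1) = \boldsymbol\beta_g$. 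Since $\Sigma = \Sigma_1 \# \Sigma_2$ is a closed surface of genus $g$, Theorem \ref{thm:connected_sum} produces a smooth (indeed embedded, since both $f_1,f_2$ are embeddings) immersion $f : \Sigma \to \mathbb R^3$ with $\iso(f) = \iso(f_2) = \sigma$ and
\begin{equation*}
	\mathcal W(f) < \mathcal W(f_1) + \mathcal W(f_2) - 4\pi = \boldsymbol\beta_g + \beta_0(\sigma) - 4\pi.
\end{equation*}
Because $f \in \mathcal S_g$ and $\iso(f) = \sigma$, the left-hand side is bounded below by $\beta_g(\sigma)$, and the chain of inequalities yields $\beta_g(\sigma) < \boldsymbol\beta_g + \beta_0(\sigma) - 4\pi$, which is the claim.

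The only genuine point requiring care is the verification that Theorem \ref{thm:connected_sum} is actually applicable, i.e.\ that neither $f_1$ nor $f_2$ parametrises a round sphere. For $f_1$ this is automatic: a genus-$g$ surface with $g \geq 1$ is not a sphere. For $f_2 = \mathbb S(\sigma)$ one must invoke the fact that $\sigma > \sqrt[3]{36\pi} = \iso(\mathbb S^2)$ strictly, so a round sphere does not satisfy the constraint $\iso = \sigma$ and hence cannot be the Schygulla minimiser; thus $\mathbb S(\sigma)$ is genuinely non-spherical. (Alternatively, since $\beta_0(\sigma) = \mathcal W(\mathbb S(\sigma)) > 4\pi$ for $\sigma \neq \iso(\mathbb S^2)$ by the rigidity in \textsc{Willmore}'s inequality, the minimiser cannot be a round sphere.) One should also note that the existence of the genus-$g$ Willmore minimiser $f_1$ is unconditional for every $g \geq 1$: for $g = 1$ it is the Clifford torus (or a conformal image), and for $g \geq 2$ it follows from $\boldsymbol\beta_g < \boldsymbol\omega_g$ together with $\boldsymbol\beta_g < 8\pi$, both of which are now theorems.

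There is essentially no obstacle here beyond bookkeeping, since Theorem \ref{thm:connected_sum} has been designed precisely to yield this corollary; the work is entirely front-loaded into that theorem. One minor subtlety worth flagging is that Theorem \ref{thm:connected_sum} asks $f_1$ to be an embedding (not merely an immersion) while $f_2$ may be only an immersion; in our application both are embeddings, so the moreover-clause gives that $f$ is embedded, which is exactly what is used downstream (together with Corollary \ref{cor:KMR-inequality}) to remove the last term in the compactness assumption \eqref{eq:intro:compactness_iso_constr} and thereby prove Theorem \ref{thm:ExMinICW}. Finally, one could remark that combining this corollary with the already-redundant constant $\boldsymbol\omega_g$ (redundant by \textsc{Marques--Neves}) reduces \eqref{eq:intro:compactness_iso_constr} to the single bound $\beta_g(\sigma) < 8\pi$, as announced.
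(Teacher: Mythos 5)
Your argument is correct and coincides with the paper's intended proof: the paper derives Corollary~\ref{cor:KMR-inequality} by feeding a smoothly embedded genus-$g$ Willmore minimiser (as $f_1$) and a Schygulla sphere $\mathbb S(\sigma)$ (as $f_2$) into Theorem~\ref{thm:connected_sum}, exactly as you do, including the check that neither is a round sphere and that the roles of $f_1$, $f_2$ are assigned so that the isoperimetric constraint $\iso(f)=\iso(f_2)=\sigma$ is the one preserved.
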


This corollary answers to a question raised by \textsc{Keller-Mondino-Rivi\`ere}, in \cite[Remark 1.7(iii)]{MR3176354}. Our corollary together with their results leads to the following statement, regarding Problem~\ref{prob:constrained_Willmore}. 

\begin{corollary}\label{cor:ExMinICW}  
	Given any integer $g\geq1$, the function $\beta_g(\cdot)$ (defined as in \eqref{eq:intro:min_iso_constr_energy}) is non-decreasing, continuous, and bounded: $4\pi<\beta_g(\cdot) \leq 8\pi$. Moreover, $\beta_g(\sigma)$ is attained by a smoothly embedded minimiser for all $\sigma$ with $\beta_g(\sigma) < 8\pi$.
\end{corollary}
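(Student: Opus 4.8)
The plan is to obtain Corollary~\ref{cor:ExMinICW} by assembling Theorem~\ref{thm:ExMinICW}, Corollary~\ref{cor:KMR-inequality}, the estimate $\boldsymbol\beta_g\ge2\pi^2$ of \textsc{Marques-Neves} \cite{MR3152944}, and the compactness theory of \textsc{Simon} \cite{MR1243525} as adapted to the constrained problem by \textsc{Keller-Mondino-Rivi\`ere} \cite{MR3176354}; the continuity of $\beta_g$ is the only point requiring real work. I would first dispose of the bounds and monotonicity. Since $\{f\in\mathcal S_g:\iso(f)=\sigma\}\subseteq\mathcal S_g$ we have $\beta_g(\sigma)\ge\boldsymbol\beta_g\ge2\pi^2>4\pi$. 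For the upper bound, the embedded genus-$g$ surfaces of \textsc{K\"{u}hnel-Pinkall} \cite{MR868618} have isoperimetric ratio tending to $+\infty$ and Willmore energy tending to $8\pi$ from above; applying to them the sphere-inversion families of the introduction (which leave $\mathcal W$ unchanged and let $\iso$ decrease continuously down to $\sqrt[3]{36\pi}$) gives $\beta_g(\sigma)<8\pi+\varepsilon$ for every admissible $\sigma$ and every $\varepsilon>0$, hence $\beta_g(\sigma)\le8\pi$. Monotonicity is already recorded in the introduction: for $\sqrt[3]{36\pi}<\sigma_1<\sigma_2$ and an embedded $f\in\mathcal S_g$ with $\iso(f)=\sigma_2$, a suitable sphere inversion $\tilde f$ of $f$ has $\iso(\tilde f)=\sigma_1$ and $\mathcal W(\tilde f)=\mathcal W(f)$, and the \textsc{Li-Yau} inequality lets one restrict to embedded competitors (every immersion with $\mathcal W<8\pi$ is embedded), so $\beta_g(\sigma_1)\le\beta_g(\sigma_2)$. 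Attainment by a smooth embedding whenever $\beta_g(\sigma)<8\pi$ is exactly Theorem~\ref{thm:ExMinICW}.

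For continuity I would prove upper and lower semicontinuity of $\beta_g$ separately. \emph{Upper semicontinuity} at $\sigma_0$ with $\beta_g(\sigma_0)<8\pi$: Theorem~\ref{thm:ExMinICW} provides a smooth \emph{embedded} minimiser $f_0\in\mathcal S_g$, and since $g\ge1$, \textsc{Alexandrov}'s theorem (a compact connected embedded hypersurface in $\mathbb R^3$ of constant mean curvature is a round sphere) forces $H$ to be non-constant on $\Sigma$. The first variation of $\iso$ at $f_0$ along a normal variation $f_t=f_0+t\varphi\,n_0$ is, up to a nonzero factor, $\varphi\mapsto\int_\Sigma(H-\bar H)\varphi\,\mathrm d\mu$ with $\bar H=\tfrac23\area(f_0)/\vol(f_0)$; taking $\varphi=H-\bar H$ makes it $\int_\Sigma(H-\bar H)^2\,\mathrm d\mu\ne0$, so by the inverse function theorem $t\mapsto\iso(f_t)$ maps a neighbourhood of $0$ onto a neighbourhood of $\sigma_0$, and as $t\mapsto\mathcal W(f_t)$ is continuous with value $\beta_g(\sigma_0)$ at $t=0$ we get $\limsup_{\sigma\to\sigma_0}\beta_g(\sigma)\le\beta_g(\sigma_0)$. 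Where $\beta_g=8\pi$, upper semicontinuity is automatic from $\beta_g\le8\pi$. \emph{Lower semicontinuity} at $\sigma_0$: choose $\sigma_k\to\sigma_0$ with $\beta_g(\sigma_k)\to\ell:=\liminf_{\sigma\to\sigma_0}\beta_g(\sigma)$, assume $\ell<8\pi$, and take minimisers $f_k\in\mathcal S_g$ with $\iso(f_k)=\sigma_k$ and $\mathcal W(f_k)=\beta_g(\sigma_k)<8\pi$ eventually. By upper semicontinuity $\ell\le\beta_g(\sigma_0)<\boldsymbol\beta_g+\beta_0(\sigma_0)-4\pi$ (Corollary~\ref{cor:KMR-inequality}), while $\boldsymbol\omega_g>8\pi$ by \textsc{Marques-Neves} and $\beta_0$ is continuous; hence the energies of the $f_k$ lie eventually below all three thresholds of \eqref{eq:intro:compactness_iso_constr} with a uniform gap, so the direct method of \textsc{Keller-Mondino-Rivi\`ere} yields a subsequence converging, with no loss of area, volume, or genus, to $f_\infty\in\mathcal S_g$ with $\iso(f_\infty)=\sigma_0$ and $\mathcal W(f_\infty)\le\ell$; thus $\beta_g(\sigma_0)\le\ell$.

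Combining the two semicontinuities, $\beta_g$ is continuous on $\{\beta_g<8\pi\}$; since $\beta_g$ is non-decreasing and $\le8\pi$, this set is an open interval $(\sqrt[3]{36\pi},\sigma^*)$ with $\beta_g\equiv8\pi$ on $[\sigma^*,\infty)$ when $\sigma^*<\infty$, and running the compactness argument once more at $\sigma^*$ with minimisers taken from the left (still with energies below all thresholds, by Corollary~\ref{cor:KMR-inequality} applied at $\sigma^*$) forces the left limit there to equal $8\pi$; hence $\beta_g$ is continuous on all of $(\sqrt[3]{36\pi},\infty)$. The step I expect to be the main obstacle is precisely this compactness argument — ruling out loss of genus, area, or volume in the limit $f_k\to f_\infty$ — and it works here only because the two ancillary thresholds in \eqref{eq:intro:compactness_iso_constr} can now be discarded, $\boldsymbol\omega_g$ by \textsc{Marques-Neves} and $\boldsymbol\beta_g+\beta_0(\sigma)-4\pi$ by Corollary~\ref{cor:KMR-inequality} (equivalently, by the connected-sum construction of Theorem~\ref{thm:connected_sum}), leaving only the hypothesis $\beta_g(\sigma)<8\pi$.
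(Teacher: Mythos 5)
Your proposal is correct and is, as far as one can tell, the argument the paper intends: Corollary~\ref{cor:ExMinICW} is stated there without a spelled-out proof, with a remark that it follows from Corollary~\ref{cor:KMR-inequality} ``together with their results'' (Keller--Mondino--Rivi\`ere \cite{MR3176354}), combined implicitly with Marques--Neves to discard $\boldsymbol\omega_g$ and Theorem~\ref{thm:ExMinICW} for attainment. You have reconstructed that chain of implications and filled in the one piece the paper delegates entirely to \cite{MR3176354}, namely the continuity of $\beta_g$, by a clean split into upper and lower semicontinuity. Your upper-semicontinuity argument (perturb the minimiser normally by $\varphi=H-\bar H$ and use Alexandrov to get a non-zero first variation of $\iso$) is essentially a global version of Lemma~\ref{lem:initial_mcf}, so the idea is the same as the one used elsewhere in the paper. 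A few small remarks. For monotonicity, the detour through Li--Yau and embeddedness is unnecessary: the inversion $I_a$ with $a\notin\im f$ preserves $\mathcal W$ and interpolates $\iso$ between $\iso(\mathbb S^2)$ and $\iso(f)$ for arbitrary immersions, since $\mathbb R^3\setminus\im f$ is always non-empty; the restriction to embeddings in the paper's introduction is about Willmore minimisers, not a structural requirement. For lower semicontinuity, you are right to flag the passage to varying $\sigma_k\to\sigma_0$ as the genuinely delicate step: \cite{MR3176354} is stated for a fixed constraint, and extending their no-loss-of-genus/area/volume compactness to a sequence of minimisers with drifting isoperimetric ratio does require that the energy bound and Corollary~\ref{cor:KMR-inequality} hold with a uniform gap along the sequence, as you observe. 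Finally, the concluding paragraph about $\sigma^*$ is redundant once you have both semicontinuities everywhere (upper semicontinuity is trivial where $\beta_g=8\pi$, lower semicontinuity trivial where $\liminf=8\pi$), so you may simply conclude continuity on the whole domain without the extra case analysis.
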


\begin{remark}
	The Willmore energy of a torus of revolution with radii $0<r<R$ is given by $\mathcal W(\mathbb T_{c}) = \frac{\pi^2}{c\sqrt{1-c^2}}$, where $c=r/R$. Its minimum is attained at $c = 1/\sqrt{2}$ which results in the  Clifford torus. Moreover,
	\begin{equation*}
	c_1:= \inf\{c > 0: \mathcal W(\mathbb T_c) < 8\pi\} = \textstyle\sqrt{\frac{1}{2} - \frac{\sqrt{16 - \pi^2}}{8}}.
	\end{equation*}
	The isoperimetric ratio of a torus of revolution $\mathbb T_c$ can be computed as $\iso(\mathbb T_c) = \sqrt[3]{16 \pi^2/c}$. We thus obtain existence of solutions for all isoperimetric ratios in the interval
	\begin{equation} \label{eq:intro:final_solution_interval}
		\bigl(\iso(\mathbb S^2),\iso(\mathbb T_{c_1})\bigr) = \left(\sqrt[3]{36\pi}, \sqrt[3]{16 \pi^2/\textstyle\sqrt{\frac{1}{2} - \frac{\sqrt{16 - \pi^2}}{8}}}\right).
	\end{equation} 
	Notice that for the Clifford torus $\mathbb T = \mathbb T_{1/\sqrt{2}}$ there holds
	\begin{equation*}
		\iso(\mathbb T) = \sqrt[3]{16\sqrt{2}\pi^2} < \iso(\mathbb T_{c_1}).
	\end{equation*} 
	Hence, our solution interval \eqref{eq:intro:final_solution_interval} is strictly larger than the interval in \eqref{eq:intro:solution_interval_torus}.
\end{remark}

\begin{remark}
As recalled above, \cite{MR3176354} showed that the solution interval is open and contains the interval in \eqref{eq:intro:solution_interval}. As a consequence of our main results, we obtain that an improved upper bound for the solution interval is given by $\inf\{\sigma: \beta_g(\sigma) = 8\pi\}$.
\end{remark}

\begin{remark}
	It is expected that if $\beta_g(\sigma)<8\pi$ for all $\sigma>\sqrt[3]{36\pi}$, then
	\begin{equation*}
		\lim_{\sigma \to \infty}\beta_g(\sigma) = 8\pi.
	\end{equation*}
	Indeed, this was proven for $g=0$ by \textsc{Schygulla} \cite{MR2928137} (see also \cite{MR3842922} for a detailed blow-up analysis).
\end{remark}

\textbf{Acknowledgements.}
A.M. is supported by  the ERC Starting Grant  802689 ``CURVATURE''. C.S. is supported by the EPSRC as part of the MASDOC DTC at the University of Warwick, grant No. EP/HO23364/1. 

\section{A suitable volume preserving  first variation}
\label{sec:initial_mcf}
In this section, we construct a variation of a given surface that initially linearly decreases the isoperimetric ratio, provided the surface is not a round sphere. The variation vector field can even be supported away from a given point. Note that the variation constructed below coincides at first order with the volume preserving mean curvature flow as developed by \textsc{Huisken}~\cite{MR921165},  multiplied by a suitable cut-off function.

\begin{lemma} \label{lem:initial_mcf}
    Suppose $f: \Sigma \to \mathbb R^3$ is a smoothly immersed closed surface which is not a round sphere and $q\in \Sigma$ is any given point. Then, there exists $q \neq p \in \Sigma$ with the following property. 
    
    For each neighbourhood $U$ of $p$ there exists a smooth normal vector field $\xi: \Sigma \to \mathbb R^3$ compactly supported in $U$ such that for $f_t:= f + t\xi$ with $t \in \mathbb R$, the function $t \mapsto \iso(f_t)$ is differentiable at $t = 0$, and 
    \begin{equation*}
    	\left. \frac{d}{\mathrm dt} \right|_{t=0} \iso(f_t) \neq 0.
    \end{equation*}   
    Moreover, 
    \begin{equation*}
    	\mathcal W(f_t) = \mathcal W(f) + O(t) \qquad \text{as }t\to 0. 
    \end{equation*}
\end{lemma}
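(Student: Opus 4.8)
The plan is to find a point $p$ where the mean curvature $H(p)$ is nonzero (this is possible because $f$ is not a round sphere — see below), and to build the variation out of a localized bump of the normal field weighted so that, at first order, it mimics Huisken's volume-preserving mean curvature flow. Recall the first-variation formulas: for a normal variation $f_t = f + t\xi$ with $\xi = \varphi\, n$, one has $\frac{d}{dt}\big|_{t=0}\area(f_t) = -\int_\Sigma \varphi H \,d\mu$ and $\frac{d}{dt}\big|_{t=0}\vol(f_t) = \int_\Sigma \varphi \,d\mu$ (with the sign conventions of \eqref{eq:intro:def_volume}). Hence
\begin{equation*}
	\left.\frac{d}{dt}\right|_{t=0}\iso(f_t) = \frac{1}{\vol(f)^{2/3}}\left(-\int_\Sigma \varphi H\,d\mu\right) - \frac{2}{3}\,\frac{\area(f)}{\vol(f)^{5/3}}\int_\Sigma \varphi\,d\mu,
\end{equation*}
which, up to the positive factor $\vol(f)^{-2/3}$, equals $-\int_\Sigma \varphi\,(H - \bar H)\,d\mu$ where $\bar H := \frac{2}{3}\,\area(f)/\vol(f)$ is the constant playing the role of the Lagrange multiplier in Huisken's flow. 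So the derivative of the isoperimetric ratio is (a positive multiple of) the $L^2(\mu)$-pairing of $\varphi$ against the function $H - \bar H$.

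The key observation is that $H - \bar H \not\equiv 0$ on $\Sigma$. Indeed, if $H \equiv \bar H$ were a nonzero constant, then $f$ would be a closed immersed CMC surface; by Alexandrov's theorem (in the embedded case) or, more to the point, since a closed surface with constant mean curvature that is also stationary for this constrained problem must in fact be a round sphere — but here we only need: $H$ constant and the surface closed forces, via the Hopf/Alexandrov-type results available for immersed spheres, or simply because $\int_\Sigma (H - \bar H)\,d\mu$ combined with the Minkowski-type identity $\int_\Sigma H\,d\mu \cdot (\text{something})$ pins down the sphere — actually the cleanest route is: if $H\equiv c$ constant then integrating the identity $\vol(f) = \frac13\int n\cdot f\,d\mu$ against the Minkowski formula $\area(f) = -\frac{1}{2}\int_\Sigma H\, n\cdot f\,d\mu$ gives $c = \bar H$ automatically, so constancy of $H$ alone is what must be excluded; and a closed immersed surface in $\mathbb R^3$ with $H$ identically zero is impossible (no closed minimal surface), while $H$ a nonzero constant forces $f$ to be a round sphere by the Hopf theorem when $\Sigma = \mathbb S^2$ and by Alexandrov when $f$ is embedded — since in our application $f_2$ is a general immersion we instead invoke that a non-spherical immersed closed surface cannot have $H\equiv \bar H$, which is exactly the statement that the round sphere is the only critical point of $\iso$. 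In any case $H - \bar H \not\equiv 0$.

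Given this, pick $q \neq p \in \Sigma$ with $(H - \bar H)(p) \neq 0$; such $p$ exists and can be taken distinct from the prescribed point $q$ because $\{H \neq \bar H\}$ is a nonempty open set, hence infinite. Now, given any neighbourhood $U$ of $p$, shrink it so that $H - \bar H$ has a fixed sign on $U$, choose a nonnegative smooth bump function $\varphi$ compactly supported in $U$ with $\varphi(p) > 0$, and set $\xi := \varphi\, n$. Then $\int_\Sigma \varphi\,(H - \bar H)\,d\mu \neq 0$, so $\frac{d}{dt}\big|_{t=0}\iso(f_t) \neq 0$; differentiability at $t=0$ is immediate since $f_t$ is a smooth family of immersions for $|t|$ small (compactness of $\mathrm{supp}\,\xi$) and $\area, \vol$ are smooth functions of $t$. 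Finally, $\mathcal W(f_t) = \frac14\int_\Sigma H_t^2\,d\mu_t$ is likewise a smooth function of $t$ near $0$ (the integrand and the measure depend smoothly on $t$ on the compact support of the variation, and are unchanged off it), so $\mathcal W(f_t) = \mathcal W(f) + O(t)$ as $t \to 0$.

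The only genuinely substantive point is the rigidity input $H - \bar H \not\equiv 0$; everything else is a routine first-variation computation plus a cut-off argument. I would therefore organize the write-up as: (i) recall the first-variation formulas for area and volume; (ii) deduce the displayed formula for $\frac{d}{dt}\iso$ and identify $\bar H$; (iii) prove $H - \bar H \not\equiv 0$ for non-spherical closed immersions; (iv) choose $p \neq q$ and the bump $\varphi$; (v) conclude differentiability, non-vanishing of the derivative, and the $O(t)$ control of $\mathcal W$.
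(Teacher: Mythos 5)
Your route is different from the paper's and worth comparing. The paper does not take $\xi=\varphi n$ and then identify the ``Lagrange multiplier'' $\bar H = \tfrac{2}{3}\area/\vol$; instead it sets $\xi=\varphi(H-h)n$ where $h$ is the \emph{local} $\varphi$-weighted average of $H$, so that the first variation of volume vanishes identically, $\left.\tfrac{d}{dt}\right|_{t=0}\vol(f_t)=-\int\varphi(H-h)\,d\mu=0$, while the first variation of area becomes the sign-definite quantity $-\int\varphi(H-h)^2\,d\mu$. This is slicker on two counts: no global constant ever enters, and non-vanishing reduces to $H$ being non-constant on $\operatorname{supp}\varphi$, a genuinely \emph{local} condition. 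The paper then secures this local condition for \emph{every} neighbourhood $U$ of $p$ by choosing $p$ on the (non-empty) boundary of a level set of $H$ --- a small but useful trick you may want to adopt, since it cleanly handles the ``for each neighbourhood $U$'' quantifier. Your version, by contrast, needs the pointwise condition $(H-\bar H)(p)\neq 0$ and then a support-shrinking step; that is fine, just slightly more to check.

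Two concrete issues in your writeup. First, a sign inconsistency: you write $\left.\tfrac{d}{dt}\right|_{t=0}\vol(f_t)=\int\varphi\,d\mu$, but the paper's convention (and the computation in its Equation~(2.1)) gives $\left.\tfrac{d}{dt}\right|_{t=0}\vol(f_t)=-\int n\cdot\xi\,d\mu=-\int\varphi\,d\mu$; with your stated signs your displayed expression actually simplifies to $-\vol^{-2/3}\int\varphi(H+\bar H)\,d\mu$, not $-\vol^{-2/3}\int\varphi(H-\bar H)\,d\mu$ as you claim. Fix the sign of the volume variation so the two are consistent. Second, and more importantly, your rigidity paragraph is muddled and ends with a false assertion: it is \emph{not} true that ``a non-spherical immersed closed surface cannot have $H\equiv\bar H$,'' nor that ``the round sphere is the only critical point of $\iso$'' among immersed closed surfaces --- Wente tori are compact immersed CMC tori in $\mathbb R^3$. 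You should simply say what the paper says: non-constancy of $H$ follows from Alexandrov's theorem (for embedded surfaces) or Hopf's theorem (for genus-zero immersions), which covers the cases actually used downstream; do not assert the stronger, false, immersed rigidity. (To be fair, the paper itself applies Alexandrov under an immersion hypothesis, so this restriction is implicit there too --- but you should not compound the issue by stating a general rigidity claim that is demonstrably wrong.)
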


\begin{proof}
    First of all, recall that for any smooth vector field $\xi: \Sigma \to \mathbb R^3$, the family $f_t = f + t\xi$ defines a variation of the immersion $f$. In particular, for small $t \in \mathbb R$, the map $f_t:\Sigma \to \mathbb R^3$ is again a smooth immersion. Thus area, volume, and Willmore energy are defined for $f_t$ with $t$ small. 
    
    By a classical theorem of \textsc{Alexandrov} \cite{MR0143162}, since $f: \Sigma \to \mathbb R^3$ is not a round sphere, the mean curvature $H$ cannot be constant. Therefore, we can choose a point $p$ in the non-empty boundary of the set
    \begin{equation*}
    	\{x \in \Sigma: H(x) = \max \im H\},
    \end{equation*} 
    where $\im H$ is the image of the mean curvature $H$. In fact, given any $c \in \im H$, we might as well have chosen $p$ in the non-empty boundary of the level set $\{H = c\}$. In particular, we can make sure that $p\neq q$. Now, given any neighbourhood $U$ of $p$, we pick a smooth function $\varphi: \Sigma \to \mathbb R$ compactly supported in $U$ such that $\varphi \geq 0$ and $\varphi(p) = 1$. Let $n:\Sigma \to \mathbb S^2$ be the Gau{\ss} map and define the constant $h$ and the vector field $\xi$ by
    \begin{equation*}
    	h = \int_{\Sigma}\varphi H\,\mathrm d\mu\left/\int_{\Sigma}\varphi\,\mathrm d\mu \right. \quad \text{and} \quad \xi = \varphi(H - h) n. 
    \end{equation*}
    Then, $\xi:\Sigma \to \mathbb R^3$ is a smooth vector field compactly supported in $U$. Using the first variation formula of the volume, we compute for $f_t = f +t\xi$ that
    \begin{equation} \label{eq:imcf:volume}
    	\left. \frac{d}{\mathrm dt} \right|_{t=0} \vol(f_t) = - \int_{\Sigma}n \cdot \xi\,\mathrm d\mu = - \int_{\Sigma}\varphi (H - h)\,\mathrm d\mu = 0.
    \end{equation}
    Moreover, using the first variation formula of the area, it follows
    \begin{equation} \label{eq:imcf:area}
    	\begin{split}
	    	\left. \frac{d}{\mathrm dt} \right|_{t=0} \area(f_t) & = - \int_{\Sigma} H n \cdot \xi\,\mathrm d\mu \\
    		& = -\int_{\Sigma}\varphi H(H - h)\,\mathrm d\mu = -\int_{\Sigma}\varphi(H - h)^2\,\mathrm d\mu<0.
    	\end{split} 
    \end{equation}
    The last expression is non-zero due to our choice of the point $p$ and the function $\varphi$. Using \eqref{eq:imcf:volume} and \eqref{eq:imcf:area}, we infer that
    \begin{equation*}
    	\left. \frac{d}{\mathrm dt} \right|_{t=0} \iso(f_t) = \left.-\int_{\Sigma}\varphi(H - h)^2\,\mathrm d\mu \right/\vol(f)^{\frac{2}{3}} <0.
    \end{equation*}
    Finally, using the first variation formula for the Willmore energy, we see that the function $t\mapsto \mathcal W(f_t)$ is differentiable at $t = 0$ which implies the conclusion.
\end{proof}

\section{Isoperimetric balance of the connected sum} \label{sec:Iso_balance}
In this section we recall the connected sum construction developed by \textsc{Bauer--Kuwert} \cite{MR1941840} and estimate its change of isoperimetric ratio (see Lemma~\ref{lem:isoperimetric_asymptotics}). Then, we prove Theorem~\ref{thm:connected_sum} by applying the volume preserving variation constructed in Lemma \ref{lem:initial_mcf} to the connected sum.

Let $f_i:\Sigma_i \to \mathbb R^3$ for $i=1,2$ be two smoothly immersed closed surfaces neither of which is a round sphere such that 
\begin{equation} \label{eq:ib:surfaces_at_zero}
    f_i^{-1}\{0\} = \{p_i\}, \quad \text{for some } p_i \in \Sigma_i, \qquad \im \mathrm Df_i(p_i) = \mathbb R^2 \times \{0\}.
\end{equation}
For some $\rho > 0$, one can then pick smooth local graph representations 
\begin{equation*}
    f_1(z) = (z,u(z)), \quad f_2(z) = (z, v(z)) \qquad \text{for } z \in D_\rho,
\end{equation*}
where $D_\rho$ is the open disk $\{z \in \mathbb R^2: |z| < \rho\}$. Letting $P,Q: \mathbb R^2 \times \mathbb R^2 \to \mathbb R$ be the second fundamental forms at the origin of $f_1$ and $f_2$, respectively, we define the error terms $\phi$ and $\psi$ such that
\begin{align*}
      &u(z) = p(z) + \varphi(z), \quad \text{where } p(z) = \frac{1}{2} P(z,z) \qquad \text{for } z \in D_\rho,\\
      &v(z) = q(z) + \psi(z), \quad \text{where } q(z) = \frac{1}{2} Q(z,z) \qquad \text{for } z \in D_\rho.
\end{align*}
We denote the trace-free parts of the second fundamental forms with
\begin{equation*}
    P^{\circ}(w,z) = P(w,z) - \frac{(\tr P)}{2} w \cdot z, \qquad Q^{\circ}(w,z) = Q(w,z) - \frac{(\tr Q)}{2} w \cdot z.
\end{equation*}
In view of \cite[Lemma~4.5]{MR1941840}, we may assume that in addition to \eqref{eq:ib:surfaces_at_zero}, there also holds 
\begin{equation} \label{eq:ib:surfaces_oriented}
    \langle P^{\circ}, Q^{\circ} \rangle > 0.
\end{equation}
By \cite[Lemma~2.3]{MR1941840}, the inverted and translated surface
\begin{equation*}
    f_1^{\circ}:\Sigma_1 \setminus \{p_1\} \to \mathbb R^3, \qquad f_1^{\circ}(p) = \frac{f_1(p)}{|f_1(p)|^2} - \frac{(\tr P)}{4}e_3,
\end{equation*}
where $e_3 = (0,0,1)$ is the third unit vector in $\mathbb R^3$, has a graph representation at infinity. That is, outside of a large ball around zero, $f_1^{\circ}$ is given by the graph of a smooth function $u^\circ$ on $\mathbb R^2 \setminus D_R$ for some $R>0$ with
\begin{equation} \label{eq:ib:representation_inversion}
    u^{\circ}(z) = p^{\circ}(z) + \varphi^{\circ}(z), \quad \text{where } p^{\circ}(z) = \frac{1}{2}P^{\circ}\left(\frac{z}{|z|},\frac{z}{|z|}\right)  
\end{equation}
such that the error term satisfies 
\begin{equation} \label{eq:ib:remainder_estimate_inversion}
    |z||\varphi^{\circ}(z)| + |z|^2|\mathrm D \varphi^{\circ}(z)| + |z|^3|\mathrm D^2 \varphi^{\circ}(z)| \leq C \qquad \text{for } z \in \mathbb R^2 \setminus D_R.
\end{equation}
Given any function $w:\Omega \to \mathbb R$ for $\Omega  \subset \mathbb R^2$ and given any scalar $\lambda > 0$, we define the scaled function $w_\lambda$ by
\begin{equation*}
	w_\lambda:\Omega_\lambda = \{z \in \mathbb R^2: \lambda^{-1}z\in \Omega\} \to \mathbb R, \qquad w_\lambda(z) = \lambda w(\lambda^{-1}z).
\end{equation*} 
Hence, for small $\alpha, \beta > 0$, the graph representations of the scaled surfaces $\alpha f_1^{\circ}$ and $(1/\beta)f_2$ are given by
\begin{align*}
    u_\alpha^{\circ}(z) & = p_\alpha^{\circ}(z) + \varphi_\alpha^{\circ}(z) && \text{for } z \in \mathbb R^2 \setminus D_{\alpha R},\\
    v_{1/\beta}(z) & = q_{1/\beta}(z) + \psi_{1/\beta}(z) && \text{for } z \in D_{\rho/\beta}.
\end{align*}
Next, pick a smooth function $\eta: \mathbb R \to \mathbb R$ such that
\begin{equation*}
    \eta(t) = 
    \begin{cases}
        0 & t \leq (1/4)\sqrt{\alpha} \\
        1 & t \geq (3/4)\sqrt{\alpha}
    \end{cases}
\end{equation*}
and such that $|\eta| + \sqrt{\alpha} |\eta'| + \alpha|\eta''| \leq C$ for some $0<C<\infty$ independent of~$\alpha$. Then, for a third parameter $\gamma$ with $0<\alpha, \beta \ll \gamma \ll1$, define for $r = |z|$,
\begin{equation*}
    w(z) = 
    \begin{cases}
        p_{\alpha}^{\circ}(z) + \eta(\gamma - r) \varphi_{\alpha}^{\circ}(z) & \alpha R < r \leq \gamma \\
        q_{1/\beta}(z) + \eta(r - 1) \psi_{1/\beta}(z) & 1 \leq r < \rho/\beta
    \end{cases}
\end{equation*}
and notice that $w = u_\alpha^{\circ}$ for $r \leq \gamma - (3/4) \sqrt{\alpha}$ as well as $w = v_{1/\beta}$ for $r \geq 1 + (3/4)\sqrt{\alpha}$. Moreover, on $D_1 \setminus D_\gamma$, let $w$ be the unique solution of the bi-harmonic Dirichlet--Neumann problem (see Lemma~3.1 and Lemma~3.2 in \cite{MR1941840})
\begin{equation}\label{eq:ib:biharmonic_interpolation}
    \begin{cases}
        \Delta^2w = 0 & \text{in } D_1 \setminus D_\gamma, \\
        w = p_\alpha^{\circ}, \quad \partial_r w = \partial_rp_{\alpha}^{\circ} & \text{on } |z| = \gamma, \\
        w = q_{1/\beta}, \quad \partial_r w = \partial_r q_{1/\beta} & \text{on } |z| = 1. 
    \end{cases}
\end{equation}
To define the pasted surface, let $U$ be the complement in $\Sigma_1$ of the preimage of the set $\{z \in \mathbb R^2: \gamma - \sqrt{\alpha} < |z| < \infty\}$ under the map $\alpha \cdot \pi \circ f_1^{\circ}$, where $\pi: \mathbb R^3 \to \mathbb R^2$ denotes the orthogonal projection. Analogously, let $V$ be the complement in $\Sigma_2$ of the preimage of the set $\{z \in \mathbb R^2: |z| < 1 + \sqrt{\alpha}\}$ under the map $(1/\beta) \cdot \pi \circ f_2$. Moreover, let $W = \{z \in \mathbb R^2:\gamma - \sqrt{\alpha} \leq |z| \leq 1 + \sqrt{\alpha}\}$. 
Then, we can write the connected sum $\Sigma = \Sigma_1 \# \Sigma_2$ as $\Sigma = (U \cup V \cup W)/\sim$, where the identification~$\sim$ is given by
\begin{align*}
    p \sim z & = \alpha \pi(f_1^{\circ}(p)) && \text{for } p \in U, \, z \in W,\\
    q \sim z &= (1/\beta)\pi(f_2(q)) && \text{for } q \in V, \, z \in W.
\end{align*}
Now, the immersion of the patched surface can be defined by
\begin{equation} \label{eq:ib:connected_sum}
    f:\Sigma \to \mathbb R^3, \qquad f(x) = 
    \begin{cases}
        \alpha f_1^{\circ}(p) & x = p\in U \subset \Sigma_1,\\
        (1/\beta) f_2(q) & x = q \in V \subset \Sigma_2,\\
        (z,w(z)) & x = z \in W.
    \end{cases}
\end{equation}
The connected sum satisfies the following energy saving proven by \textsc{Bauer--Kuwert} \cite{MR1941840}.

\begin{lemma}[\protect{See \cite[Lemma~4.4]{MR1941840}}] \label{lem:energy_balance}
    Taking $\beta = t \alpha$ for any $t>0$, and letting $\alpha$ tend to zero, there holds 
    \begin{equation} \label{eq:ib:energy-excess}
    	\begin{split}
        	&\mathcal W(f) - (\mathcal W(f_1) + \mathcal W(f_2) - 4\pi) \\
        	&\qquad = \pi \alpha^2 \Bigl(|P^{\circ}|^2 - t \langle P^{\circ}, Q^{\circ} \rangle + O_t(\gamma^2\log(\gamma)^2) + O_{t,\gamma}(\alpha^{1/2})\Bigr),
        \end{split}
    \end{equation}
    where the constants in $O_t$ and $O_{t,\gamma}$ depend on $t$, respectively $t$ and $\gamma$.
\end{lemma}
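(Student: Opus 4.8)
The plan is to compute $\mathcal W(f)$ separately on the three pieces $U$, $V$, $W$ of $\Sigma$ on which $f$ is given by the respective cases of \eqref{eq:ib:connected_sum}: almost all of the energy is rigid — fixed by the scale invariance of $\mathcal W$ and by the conformal behaviour of sphere inversions — so that the entire deviation in \eqref{eq:ib:energy-excess} is produced on the interpolation annulus $W$. Since $\tfrac14\int H^2\,\mathrm d\mu$ is additive over this partition and invariant under the rescalings $x\mapsto\alpha x$ and $x\mapsto(1/\beta)x$, and since $0$ is attained exactly once by $f_1$, so that $\mathcal W(f_1^{\circ})=\mathcal W(f_1)-4\pi$ (conformal invariance of $\int(\tfrac14H^2-K)$ together with Gauss--Bonnet for the single planar end), one obtains
\begin{equation*}
	\mathcal W(f)-\bigl(\mathcal W(f_1)+\mathcal W(f_2)-4\pi\bigr)=\mathcal W(f|_W)-E_1-E_2,
\end{equation*}
where $E_1:=\mathcal W(f_1^{\circ}|_{\Sigma_1\setminus U})$ is the Willmore energy of the cut-off planar end of $f_1^{\circ}$ and $E_2:=\mathcal W(f_2|_{\Sigma_2\setminus V})$ that of the cut-off disk of $f_2$ around $p_2$. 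It remains to estimate the three quantities on the right-hand side.

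All three are graph integrals with small gradient, so that $\tfrac14\int H^2\sqrt{1+|\nabla g|^2}=\tfrac14\int(\Delta g)^2+O\bigl(\|\nabla g\|_\infty^2\int|\nabla^2 g|^2\bigr)$. Using the asymptotic expansion \eqref{eq:ib:representation_inversion}--\eqref{eq:ib:remainder_estimate_inversion} of $f_1^{\circ}$ at infinity, together with the fact that $p^{\circ}$ is homogeneous of degree zero (hence $p_\alpha^{\circ}=\alpha p^{\circ}$), one computes that $E_1$ is a fixed multiple of $\gamma^{-2}\alpha^2|P^{\circ}|^2$, up to an error $\pi\alpha^2\,O_\gamma(\alpha^{1/2})$; similarly the Taylor expansion $v=q+\psi$ gives that $E_2$ is, to leading order, a fixed multiple of $\beta^2(\tr Q)^2$, up to $\pi\alpha^2\,O_{t,\gamma}(\alpha^{1/2})$. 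The Willmore energy on the two flaps of $W$ of radial width $\sqrt\alpha$, on which $\eta$ interpolates between $u_\alpha^{\circ}$ and $p_\alpha^{\circ}$, respectively between $v_{1/\beta}$ and $q_{1/\beta}$, is again $\pi\alpha^2\,O_{t,\gamma}(\alpha^{1/2})$; hence
\begin{equation*}
	\mathcal W(f|_W)=\tfrac14\int_{D_1\setminus D_\gamma}(\Delta w)^2+\pi\alpha^2\,O_{t,\gamma}(\alpha^{1/2}),
\end{equation*}
with $w$ the biharmonic interpolation of \eqref{eq:ib:biharmonic_interpolation}.

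The heart of the argument is the explicit evaluation of $\tfrac14\int_{D_1\setminus D_\gamma}(\Delta w)^2$. Separating $w$ into Fourier modes in $\theta$, the inner boundary datum $p_\alpha^{\circ}=\alpha p^{\circ}$ is a radially constant, purely frequency-two trace-free mode, while the outer datum $q_{1/\beta}=\beta q$ splits into a frequency-zero part proportional to $\tr Q$ and a frequency-two part proportional to $Q^{\circ}$. On each frequency the biharmonic equation has an explicit four-dimensional solution space — spanned by $\{r^2,r^{-2},r^4,1\}$ on frequency two and by $\{1,\log r,r^2,r^2\log r\}$ on frequency zero — and solving the four Dirichlet--Neumann conditions of \eqref{eq:ib:biharmonic_interpolation} expresses the radial coefficients as explicit functions of $\alpha$, $\beta=t\alpha$, $\gamma$, and the amplitudes of $P^{\circ}$, $Q^{\circ}$, $\tr Q$. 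Three things then conspire: the $\gamma^{-2}$-divergent part of the frequency-two energy cancels $E_1$; the leading frequency-zero energy cancels $E_2$; and, once one retains the $O(\gamma^2)$-corrections to the frequency-two coefficients — which multiply the $\gamma^{-2}$ weight and hence feed back at order $\alpha^2$ — the surviving frequency-two contribution is precisely $\pi\alpha^2\bigl(|P^{\circ}|^2-t\langle P^{\circ},Q^{\circ}\rangle\bigr)$, while the frequency-zero corrections (which involve $\log\gamma$ through the $\log r$ and $r^2\log r$ radial solutions) contribute $\pi\alpha^2\,O_t(\gamma^2\log^2\gamma)$. Collecting everything yields \eqref{eq:ib:energy-excess}.

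I expect this third step to be the main obstacle: one needs the biharmonic interpolation not merely to leading order, but precisely enough that the $\gamma^{-2}$-divergences cancel exactly against $E_1$ and $E_2$ and a clean $O(\alpha^2)$ remainder with the correct coefficient survives. This forces one to compute the first $\gamma^2$- (and, on the frequency-zero mode, $\gamma^2\log^2\gamma$-) corrections to all four radial coefficients and to carry them through a $\gamma^{-2}$-weighted integral, while simultaneously bookkeeping the three independent small parameters $\alpha\ll\gamma\ll1$ with $\beta=t\alpha$, so that every leftover contribution — the graph nonlinearity, the remainders $\varphi_\alpha^{\circ}$ and $\psi_{1/\beta}$ controlled by \eqref{eq:ib:remainder_estimate_inversion}, the $\eta$-cutoff flaps, and the mismatch between the radii $\gamma-\sqrt\alpha$, $\gamma$ and $1$, $1+\sqrt\alpha$ — lands in one of the two declared error terms $O_t(\gamma^2\log^2\gamma)$ or $O_{t,\gamma}(\alpha^{1/2})$. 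The existence, uniqueness, and a priori estimates for the biharmonic Dirichlet--Neumann problem on the annulus that are needed throughout are provided by Lemmas~3.1 and~3.2 of \cite{MR1941840}.
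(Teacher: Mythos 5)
The paper does not prove this lemma itself; the statement is quoted verbatim from \textsc{Bauer--Kuwert} \cite[Lemma~4.4]{MR1941840} (as the bracketed citation in the lemma header indicates), and the present paper simply applies it as a black box. So there is no ``paper's own proof'' to compare against; the relevant benchmark is the Bauer--Kuwert argument, and your sketch does capture its structure: additivity of $\mathcal W$ over the decomposition $U\cup W\cup V$, scale invariance plus $\mathcal W(f_1^{\circ})=\mathcal W(f_1)-4\pi$ to convert the problem to $\mathcal W(f|_W)-E_1-E_2$, linearisation of the Willmore integrand to $\tfrac14\int(\Delta w)^2$ with a graph-nonlinearity error, and the explicit frequency-$0$/frequency-$2$ decomposition of the biharmonic Dirichlet--Neumann problem on the annulus with radial solution bases $\{1,\log r, r^2, r^2\log r\}$ and $\{1,r^2,r^{-2},r^4\}$.

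The genuine gap is the one you yourself flag: the ``three things conspire'' paragraph is a description of the answer, not a derivation of it. In particular, the content of the lemma lives in (a) the exact cancellation of the $\gamma^{-2}$-part of the frequency-two biharmonic energy against $E_1$ and of the leading frequency-zero energy against $E_2$, and (b) the \emph{sign and coefficient} of the surviving cross term $-\,t\langle P^{\circ},Q^{\circ}\rangle$. The minus sign is what makes the orientation normalisation \eqref{eq:ib:surfaces_oriented} useful, and ultimately what makes the whole connected-sum construction lower the energy; it is not predictable from the structure of the problem and must come out of solving the four-by-four linear system for the radial coefficients and carrying the $\gamma^{-2}$-weighted integral. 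Your sketch asserts the cancellations and the coefficient $\pi\alpha^2\bigl(|P^{\circ}|^2-t\langle P^{\circ},Q^{\circ}\rangle\bigr)$ but does not compute them, so as written it does not establish the lemma. A secondary point worth noting: the identity $\mathcal W(f_1^{\circ})=\mathcal W(f_1)-4\pi$ that you invoke is itself nontrivial (it uses that $0$ is attained with multiplicity one and that the inverted surface has a single embedded planar end of multiplicity one); it deserves a citation rather than a parenthetical appeal to Gauss--Bonnet, since the constant $4\pi$ feeds directly into the statement being proved.
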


We will show that the isoperimetric ratio of the connected sum behaves as follows.

\begin{lemma} \label{lem:isoperimetric_asymptotics}
    Taking $\beta = t \alpha$ for any $t>0$, and letting $\alpha$ tend to zero, there holds
    \begin{equation} \label{eq:ib:convergence_iso}
        \iso(f) = \iso(f_2) + O_{t,\gamma}(\alpha^{2+\frac{1}{2}}),
    \end{equation}
    where the constant in $O_{t,\gamma}$ depends on $t$ and $\gamma$.
\end{lemma}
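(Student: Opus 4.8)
The plan is to exploit the scale invariance of $\iso$ by comparing $f$ with the copy $\beta f$ of $f$ dilated by the factor $\beta$: since $\iso(\beta f) = \iso(f)$ and, by the construction \eqref{eq:ib:connected_sum}, $\beta f$ agrees with $f_2$ on the piece $V$ (there $\beta f = \beta\cdot(1/\beta)f_2 = f_2$), it suffices to compare $\beta f$ with $f_2$. Writing $\Sigma = U \cup W \cup V$ up to $\mu$-null sets, and using that both $\area$ and $\vol(g) = \tfrac13\int_\Sigma n_g\cdot g\,\mathrm d\mu_g$ are additive over this decomposition while the contributions over $V$ coincide for $\beta f$ and $f_2$, one is reduced to estimating the six surface integrals in
\[
	\area(\beta f) - \area(f_2) = \area(\beta f|_U) + \area(\beta f|_W) - \area(f_2|_{V^c}),
\]
\[
	\vol(\beta f) - \vol(f_2) = \tfrac13\Bigl(\textstyle\int_U n_{\beta f}\cdot\beta f\,\mathrm d\mu + \int_W n_{\beta f}\cdot\beta f\,\mathrm d\mu - \int_{V^c} n_{f_2}\cdot f_2\,\mathrm d\mu\Bigr),
\]
where $V^c = \Sigma_2\setminus V$ is the small disc around $p_2$ that $\beta f$ excises from $f_2$. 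The conclusion then follows by a first-order expansion of $x\mapsto x^{-2/3}$ at the nonzero value $\vol(f_2)$.

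The volume part needs no cancellation. On any graph piece $(z,g(z))$ over $\Omega$, the integrand of $n_g\cdot g\,\mathrm d\mu_g$ equals $-\mathrm Dg(z)\cdot z + g(z)$ exactly. For $f_2|_{V^c}$ (the graph of $v$ over $D_{\beta(1+\sqrt\alpha)}$), using $v(0)=0$ and $\mathrm Dv(0)=0$ gives $\int_{V^c} n_{f_2}\cdot f_2\,\mathrm d\mu = O_t(\alpha^4)$. For $\beta f|_W$ (the graph of $z\mapsto\beta w(z/\beta)$ over $\beta W$) one needs $\|w\|_{C^1(W)} = O_{t,\gamma}(\alpha)$, which comes from the cutoff definition of $w$ on the two thin annuli together with elliptic estimates for the biharmonic problem \eqref{eq:ib:biharmonic_interpolation} on $D_1\setminus D_\gamma$ (whose boundary data are $O_{t,\gamma}(\alpha)$); this yields $\int_W n_{\beta f}\cdot\beta f\,\mathrm d\mu = \beta^3\int_W(-\mathrm Dw\cdot z + w)\,\mathrm dz = O_{t,\gamma}(\alpha^4)$. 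Finally $\int_U n_{\beta f}\cdot\beta f\,\mathrm d\mu = (\beta\alpha)^3\int_U n_{f_1^\circ}\cdot f_1^\circ\,\mathrm d\mu_{f_1^\circ}$, and on the truncated planar end of $f_1^\circ$ the integrand $-\mathrm Du^\circ\cdot z + u^\circ$ is bounded (since $p^\circ$ is zero-homogeneous, so $\mathrm Dp^\circ\cdot z\equiv 0$ and $u^\circ$ stays bounded, while $\varphi^\circ$ decays by \eqref{eq:ib:remainder_estimate_inversion}), so this integral is $O(R_\alpha^2)$ with $R_\alpha = (\gamma-\sqrt\alpha)/\alpha$; the remaining bulk of $f_1^\circ$ contributes only $O(1)$, hence $\int_U n_{\beta f}\cdot\beta f\,\mathrm d\mu = (\beta\alpha)^3 O(R_\alpha^2) = O_{t,\gamma}(\alpha^4)$. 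Altogether $\vol(\beta f) = \vol(f_2) + O_{t,\gamma}(\alpha^4)$; geometrically, the three caps are $O(\beta)$-small and essentially flat, hence enclose negligible volume.

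The area is the crux, since now the three caps are comparable to $\beta$ at leading order and their flat contributions must cancel exactly. Using $\|\mathrm Dv\|_{C^0} = O(\beta)$ gives $\area(f_2|_{V^c}) = \pi\beta^2(1+\sqrt\alpha)^2 + O_t(\alpha^4)$; using $\|\mathrm Dw\|_{C^0(W)} = O_{t,\gamma}(\alpha)$ gives $\area(\beta f|_W) = \beta^2\int_W\sqrt{1+|\mathrm Dw|^2}\,\mathrm dz = \beta^2|W| + O_{t,\gamma}(\alpha^4)$ with $|W| = \pi\bigl((1+\sqrt\alpha)^2 - (\gamma-\sqrt\alpha)^2\bigr)$; and, since the planar end of $f_1^\circ$ is the graph of $u^\circ$ over $\mathbb R^2\setminus D_R$ with $|\mathrm Du^\circ(z)| = O(1/|z|)$ by \eqref{eq:ib:remainder_estimate_inversion} while the remaining bulk of $f_1^\circ$ has finite fixed area, one gets $\area(f_1^\circ|_U) = \pi R_\alpha^2 + O_\gamma(\log(1/\alpha))$ and hence $\area(\beta f|_U) = (\beta\alpha)^2\area(f_1^\circ|_U) = \pi\beta^2(\gamma-\sqrt\alpha)^2 + O_{t,\gamma}(\alpha^4\log(1/\alpha))$. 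The three flat leading terms then cancel identically, $\pi\beta^2(\gamma-\sqrt\alpha)^2 + \beta^2|W| - \pi\beta^2(1+\sqrt\alpha)^2 = 0$, leaving $\area(\beta f) = \area(f_2) + O_{t,\gamma}(\alpha^4\log(1/\alpha))$. Combining with the volume estimate and expanding,
\[
	\iso(f) = \iso(\beta f) = \iso(f_2)\bigl(1 + O_{t,\gamma}(\alpha^4\log(1/\alpha))\bigr) = \iso(f_2) + O_{t,\gamma}(\alpha^{2+\frac12}),
\]
because $\alpha^4\log(1/\alpha) = O(\alpha^{7/2}) \subseteq O(\alpha^{5/2})$ as $\alpha\to 0$.

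The main obstacle is the area bookkeeping: one must correctly isolate the flat leading term of each of the three caps at the $\beta^2$-scale — a large-but-flat truncated plane of radius $\sim\gamma\beta$ coming from the inverted surface $f_1^\circ$, the flat annular neck of area $\beta^2|W|$, and the small flat disc of radius $\sim\beta$ excised from $f_2$ — and recognise the algebraic identity ``flat disc $=$ flat annulus $+$ flat disc'', so that only the curvature and remainder corrections survive; those are then controlled by the Bauer--Kuwert estimate \eqref{eq:ib:remainder_estimate_inversion} and by elliptic estimates for \eqref{eq:ib:biharmonic_interpolation}. (This argument in fact delivers the sharper error $O_{t,\gamma}(\alpha^4\log(1/\alpha))$ in \eqref{eq:ib:convergence_iso}, although the stated order $\alpha^{2+\frac12}$ already suffices for the applications.)
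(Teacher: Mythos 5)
Your proof is correct, and it follows essentially the same route as the paper: split the surface into the three graph pieces over $U$, $W$, $V$, estimate each piece using the pointwise bounds on $u^{\circ}$, $w$, $v$ already established in Bauer--Kuwert, and observe that the flat (disc-area) leading terms cancel exactly, leaving only curvature remainders. Working at the scale of $\beta f$ rather than $f$ (so that the $V$-piece is literally $f_2$) is a cosmetic but clean reformulation of the paper's comparison of $f$ with $(1/\beta)f_2$ followed by rescaling.

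What you do differently --- and what it buys --- is the bookkeeping. The paper's estimate \eqref{eq:ib:area_u} approximates $\area(f|_U)$ by $\mathcal L^2(D_\gamma)$, incurring an $O(\sqrt\alpha)$ error from the $\sqrt\alpha$-wide transition annulus, and similarly for $W$ and $V$, with the three $\mathcal L^2(D_\gamma)$, $\mathcal L^2(D_1\setminus D_\gamma)$, $\mathcal L^2(D_1)$ cancelling; the resulting error is $O_{t,\gamma}(\alpha^{2+\frac12})$ at the scale of $\beta f$. You instead keep the exact radii $\gamma-\sqrt\alpha$ and $1+\sqrt\alpha$, so that the discs/annulus cancellation is exact, and you replace the uniform bound $|\mathrm D u_\alpha^\circ|\leq C\sqrt\alpha$ from \eqref{eq:ib:scaled_inversion2} by the decay $|\mathrm Du^\circ(z)|=O(1/|z|)$, integrating to an $O(\log(1/\alpha))$ area excess on the planar end of $f_1^\circ$. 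For the volume you use the exact integrand $-\mathrm Dg\cdot z+g$ and the $0$-homogeneity of $p^\circ$ (Euler's identity $\mathrm Dp^\circ\cdot z\equiv 0$), where the paper settles for the crude bound $|z||\mathrm Dg|+|g|$ and the conclusion that the right-hand side is $O_{t,\gamma}(1)$ at the scale of $f$. Your refinements yield $\iso(f)=\iso(f_2)+O_{t,\gamma}(\alpha^4\log(1/\alpha))$, which is stronger than the stated $O_{t,\gamma}(\alpha^{2+\frac12})$; this gives extra room in the choice $2<m<2+\tfrac12$ at the end of the proof of Theorem~\ref{thm:connected_sum}, though it is not needed for the result. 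One small imprecision: the decay $|\mathrm Du^\circ(z)|=O(1/|z|)$ is not solely a consequence of \eqref{eq:ib:remainder_estimate_inversion} (which controls only $\varphi^\circ$); it also uses $|\mathrm Dp^\circ(z)|=O(1/|z|)$, which follows from the $0$-homogeneity of $p^\circ$ as in the displayed computation for $\mathrm Dp^\circ$ in the paper.
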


\begin{proof}
    First, we will compute the area of the surface $f:\Sigma \to \mathbb R^3$. By definition of the connected sum in Equation~\eqref{eq:ib:connected_sum}, we can split the area into
    \begin{equation} \label{eq:ib:area_splitting}
        \area(f) =  \area(f|_{U}) + \area(f|_{W}) + \area(f|_{V}).
    \end{equation}
	Let 
	\begin{equation*}
		U_1 = \Sigma_1 \setminus (\pi \circ f_1^\circ)^{-1}\{z \in \mathbb R^2: R < |z| < \infty\}
	\end{equation*}
	where again, $\pi$ denotes the orthogonal projection of $\mathbb R^3$ onto $\mathbb R^2$. Then, we can write
	\begin{equation} \label{eq:ib:area_inversion}
		\area(f|_{U}) = \alpha^2 \area(f_1^\circ|_{U_1}) + \int_{D_{\gamma - \sqrt{\alpha}}\setminus D_{\alpha R}}\sqrt{1 + |\mathrm Du_\alpha^\circ|^2}\,\mathrm d \mathcal L^2,
	\end{equation}
	where $\mathcal L^2$ denotes the 2-dimensional Lebesgue measure. For $p^\circ$ defined as in Equation~\eqref{eq:ib:representation_inversion}, we have
	\begin{equation*}
	\mathrm{D} p^\circ(z) = P^\circ\left(\frac{z}{|z|}, \mathrm D\left(\frac{z}{|z|}\right)\right), \qquad \mathrm  D\left(\frac{z}{|z|}\right) = \frac{\mathrm{Id}}{|z|} -  \frac{\langle z,\cdot\rangle}{|z|^3}z.
	\end{equation*}
	Hence, $|p^\circ(z)| + |z||\mathrm D p^\circ(z)|\leq C$ for $z \in \mathbb R^2 \setminus D_{R}$ and after scaling,
	\begin{equation*}
		|p_\alpha^\circ(z)| + |z||\mathrm D p_\alpha^\circ(z)| \leq C\alpha \qquad \text{for } z \in \mathbb R^2 \setminus D_{\alpha R}.
	\end{equation*}
	Moreover, from the error estimation in Equation~\eqref{eq:ib:remainder_estimate_inversion},
	\begin{equation*}
		|z||\varphi_\alpha^\circ(z)| + |z|^2|\mathrm D\varphi_\alpha^\circ(z)| \leq C \alpha^2 \qquad \text{for } z \in \mathbb R^2 \setminus D_{\alpha R}.
	\end{equation*}
    Using $u_\alpha^\circ = p_\alpha^\circ + \varphi_\alpha^\circ$ we thus infer
    \begin{align} \label{eq:ib:scaled_inversion1}
      	|u_\alpha^\circ(z)|+|\mathrm Du_\alpha^{\circ}(z)| &\leq C && \qquad \text{for } z \in D_{\sqrt{\alpha} R} \setminus D_{\alpha R},\\ \label{eq:ib:scaled_inversion2}
    	|u_\alpha^\circ(z)|+|\mathrm Du_\alpha^{\circ}(z)| &\leq C\sqrt{\alpha}   && \qquad \text{for } z \in \mathbb R^2 \setminus D_{\sqrt{\alpha} R}.
    \end{align}
    Therefore, the area in Equation~\eqref{eq:ib:area_inversion} can be estimated by
    \begin{align*}
        \area(f|_{U}) \leq C \alpha^2  + C \mathcal L^2 \bigl( D_{R\sqrt{\alpha}} \setminus D_{R\alpha}\bigr) + (1 + C\sqrt{\alpha}) \mathcal L^2 ( D_\gamma ) 
        = \mathcal L^2(D_\gamma) + O(\sqrt{\alpha}) 
    \end{align*}
    as $\alpha \to 0$. On the other hand,
    \begin{equation*}
        \area(f|_{U}) \geq \mathcal L^2(D_{\gamma - \sqrt{\alpha}}) = \mathcal L^2(D_{\gamma}) - O(\sqrt{\alpha}) \qquad \text{as } \alpha \to 0
    \end{equation*}
    and thus
    \begin{equation} \label{eq:ib:area_u}
        \area(f|_{U}) = \mathcal L^2(D_{\gamma}) + O(\sqrt{\alpha}) \qquad \text{as } \alpha \to 0.
    \end{equation}
    From \cite[Equation~(4.13)]{MR1941840} it follows that
    \begin{equation} \label{eq:ib:asymptotics_un-inverted}
        |v_{1/\beta}(z)|+|\mathrm Dv_{1/\beta}(z)| \leq C(t) \alpha \qquad \text{for } z \in D_{1 + \sqrt{\alpha}}
    \end{equation}
    and hence,
    \begin{equation} \label{eq:ib:area_v}
        %\begin{split}
            \area((1/\beta) f_2) - \area(f|_{V}) = \int_{D_{1+\sqrt{\alpha}}}\sqrt{1 + |\mathrm D v_{1/\beta}|^2}\,\mathrm d \mathcal L^2\\
            = \mathcal L^2(D_1) + O_t(\sqrt \alpha)
        %\end{split}
    \end{equation}
    as $\alpha \to 0$. Because of the homogeneity of $p^\circ$ and $q$ (notice that $p_\alpha^\circ = \alpha p^\circ$, $q_{1/\beta} = \beta q$), the parameters $\alpha$ and $\beta$ enter linearly into the boundary values of $w$ on $D_1\setminus D_\gamma$ and thus linearly into the solution \eqref{eq:ib:biharmonic_interpolation} (see (3.29), (3.30), and (3.35) in \cite{MR1941840}). Therefore, using $\beta = t\alpha$ as well as (4.21), (4.22), and (4.25) in \cite{MR1941840}, we infer 
    \begin{equation} \label{eq:ib:asymptotics_biharmonic_interpolation}
        |w(z)| + |\mathrm Dw(z)| \leq C(t,\gamma) \alpha \qquad \text{for } z\in D_{1+\sqrt{\alpha}}\setminus D_{\gamma - \sqrt{\alpha}}.
    \end{equation}
    It follows 
    \begin{equation} \label{eq:ib:area_w}
    	%\begin{split} 
        	\area(f|_W) = \int_{ D_{1+\sqrt{\alpha}}\setminus D_{\gamma - \sqrt{\alpha}}}\sqrt{1+|\mathrm Dw|^2}\,\mathrm d\mathcal L^2 
        	= \mathcal L^2(D_1 \setminus D_\gamma) + O_{t,\gamma}(\sqrt \alpha)
        %\end{split}
    \end{equation} 
    as $\alpha \to 0$. Putting \eqref{eq:ib:area_u}, \eqref{eq:ib:area_v}, and \eqref{eq:ib:area_w} into \eqref{eq:ib:area_splitting}, leads to
    \begin{equation*} 
        \area(f) = \area((1/\beta) f_2) +  O_{t,\gamma}(\sqrt{\alpha}) \qquad \text{as } \alpha \to 0
    \end{equation*}
    and thus
    \begin{equation} \label{eq:ib:area_patched_surface}
    	\area(f) = (t\alpha)^{-2}\bigl(\area(f_2) + O_{t,\gamma}(\alpha^{2+\frac{1}{2}})\bigr) \qquad \text{as } \alpha \to 0.
    \end{equation}
    
    Next, we will estimate the volume of the patched surface $f:\Sigma\to \mathbb R^3$. Using the definition of the volume~\eqref{eq:intro:def_volume}, as well as the formula for the Gauss map of graphical surfaces, we estimate
    \begin{align*}
        &|\vol(f) - \vol ((1/\beta) f_2)| \leq \alpha^3\vol(f_1^\circ|_{U_1}) + \int_{D_{\gamma - \sqrt{\alpha}}\setminus D_{\alpha R}}|z||\mathrm D u_\alpha^\circ| + |u_\alpha^\circ|\,\mathrm d \mathcal{L}^2(z) \\
        &\qquad + \int_{D_{1 + \sqrt{\alpha}}\setminus D_{\gamma - \sqrt{\alpha}}}|z||\mathrm D w| + |w|\,\mathrm d \mathcal{L}^2(z) + \int_{D_{1 + \sqrt{\alpha}}}|z||\mathrm D v_{1/\beta}| + |v_{1/\beta}|\,\mathrm d \mathcal{L}^2(z).
    \end{align*}
    In view of \eqref{eq:ib:scaled_inversion1}, \eqref{eq:ib:scaled_inversion2},  \eqref{eq:ib:asymptotics_un-inverted}, and  \eqref{eq:ib:asymptotics_biharmonic_interpolation}, we can see that the right hand side is uniformly bounded in $\alpha$ for $0<\alpha \ll \gamma \ll1$ and $\beta = t\alpha$. That means,
    \begin{equation*} 
    	\vol(f) = \vol ((1/\beta) f_2) + O_{t,\gamma}(1) \qquad \text{as }\alpha \to 0
    \end{equation*}
    and therefore,
    \begin{equation} \label{eq:ib:volume_patched_surface}
    	\vol(f) = (t\alpha)^{-3}\bigl(\vol (f_2) + O_{t,\gamma}(\alpha^3)\bigr) \qquad \text{as }\alpha \to 0.
    \end{equation}
    Notice that by differentiability of the function $s \mapsto (\vol(f_2) + s)^{-2/3}$ at $s = 0$, there holds
    \begin{equation*}
        \frac{1}{\vol(f_2)^{\frac{2}{3}}} = \frac{1}{(\vol(f_2) + s)^{\frac{2}{3}}} +O(s) \qquad \text{as } s \to 0.
    \end{equation*}
    Thus, using $\beta = t\alpha$,  \eqref{eq:ib:area_patched_surface}, and \eqref{eq:ib:volume_patched_surface}, we infer
    \begin{align*}
        \iso(f) = \frac{\area(f_2)}{(\vol(f_2) + O_{t,\gamma}(\alpha^3))^{\frac{2}{3}}} + \frac{ O_{t,\gamma}(\alpha^{2+\frac{1}{2}})}{(\vol(f_2) + O_{t,\gamma}(\alpha^3))^{\frac{2}{3}}}
        = \iso(f_2) + O_{t,\gamma}(\alpha^{2+\frac{1}{2}})
    \end{align*}
    as $\alpha \to 0$, which finishes the proof.
\end{proof}

Now, we can prove Theorem~\ref{thm:connected_sum}.

\begin{proof}[Proof of Theorem~\ref{thm:connected_sum}]
	
	Let $\Sigma_1,\Sigma_2$ be two closed surfaces, $f_1:\Sigma_1 \to \mathbb R^3$ be a smooth embedding, and  $f_2:\Sigma_2 \to \mathbb R^3$ be a smooth immersion such that neither $f_1$ nor $f_2$ parametrise a round sphere. Notice that the multiplicity of $f_2$ does not affect the construction of the connected sum. First, pick $p_i \in \Sigma_i$ for $i=1,2$ according to \eqref{eq:ib:surfaces_at_zero} and \eqref{eq:ib:surfaces_oriented},  with $p_2 \in f_2^{-1}\{0\}$ instead of $\{p_2\} = f_2^{-1}\{0\}$. 
	\\Apply Lemma~\ref{lem:initial_mcf} to the surface $f_2:\Sigma_2 \to \mathbb R^3$: denote $f_{2,s} = f_2 + s\xi$ with $\xi$ compactly supported away from the point $p_2$ and 
	\begin{align} \label{eq:ib:mcf_Willmore}
		\mathcal W(f_{2,s}) &= \mathcal W(f_2) + O(s) && \text{as } s\to 0, \\ \label{eq:ib:mcf_iso}
		\iso(f_{2,s}) & = \iso(f_2) - c_2s + o(s) && \text{as } s\to 0,
	\end{align} 
	for some $c_2>0$. Now, apply the connected sum construction described in this section to the surfaces $f_1:\Sigma_1 \to \mathbb R^3$ and $f_{2,s}:\Sigma_2 \to \mathbb R^3$; in this way we obtain the glued surface $f_{s,\alpha}:\Sigma\to \mathbb R^3$, where $\Sigma$ is the connected sum of $\Sigma_1$ and $\Sigma_2$. Notice that the right hand side in Equation~\eqref{eq:ib:energy-excess} does not depend on $s$ as the vector field $\xi$ is compactly supported away from the patching area. Therefore, we can first choose $t>0$ large enough such that $|P^\circ|^2 - t\langle P^\circ, Q^\circ \rangle <0$ and then choose $0<\gamma <1$ small enough such that still $|P^\circ|^2 - t\langle P^\circ, Q^\circ \rangle + O_{t}(\gamma^2 \log(\gamma)^2) <0$ to obtain from Lemma~\ref{lem:energy_balance} that
	\begin{equation}\label{eq:ib:applied_energy_balance}
		\mathcal W(f_{s,\alpha}) - (\mathcal W(f_1) + \mathcal W(f_{2,s}) - 4\pi) = -c\alpha^2 +O(\alpha^{2+\frac{1}{2}})
	\end{equation}  
	as $\alpha \to 0$ for some $c>0$. Putting \eqref{eq:ib:mcf_Willmore} into \eqref{eq:ib:applied_energy_balance} and \eqref{eq:ib:mcf_iso} into \eqref{eq:ib:convergence_iso}, we infer
	\begin{align} \label{eq:ib:final_Willmore}
		\mathcal W(f_{s,\alpha}) - (\mathcal W(f_1) + \mathcal W(f_2) - 4\pi) &= -c\alpha^2 +O(\alpha^{2+\frac{1}{2}}) + O(s) \\ \label{eq:ib:final_iso}
		\iso(f_{s,\alpha}) - \iso(f_2) &= O(\alpha^{2+\frac{1}{2}}) - c_2s + o(s)
	\end{align}
	as $s, \alpha \to 0$. Picking any $2 < m < 2 + \frac{1}{2}$, we see that for small $\alpha>0$ and for $|s| \leq \alpha^m$, the right hand side in \eqref{eq:ib:final_Willmore} is strictly negative, while for $s = \alpha^m$ the right hand side in \eqref{eq:ib:final_iso} is strictly negative and for $s = - \alpha^m$ the right hand side in \eqref{eq:ib:final_iso} is strictly positive. Notice that once $\alpha$ is fixed, $\iso(f_{s,\alpha})$ depends continuously on $s$. Therefore, there exists $\alpha >0$ small and $-\alpha^m < s < \alpha^m$ such that the right hand side in \eqref{eq:ib:final_Willmore} is strictly negative, while the right hand side in \eqref{eq:ib:final_iso} is zero. In other words, $f_{s,\alpha}$ satisfies \eqref{eq:intro:iso_constraint} and \eqref{eq:intro:strict_inequality}. Notice that the immersion $f_{s,\alpha}$ is smooth everywhere except on the  boundary of $D_1\setminus D_\gamma$, where the bi-harmonic function meets the  second fundamental forms with Dirichlet and Neumann conditions, see \eqref{eq:ib:biharmonic_interpolation}. In general, $f_{s,\alpha}$ is only $C^{1,1}$-regular.It remains to show that one can approximate $f_{s,\alpha}$ by a smooth immersion without loosing the conditions \eqref{eq:intro:iso_constraint} and \eqref{eq:intro:strict_inequality}. In view of its construction, we can choose a local graph representation of $f_{s,\alpha}$ given by a function $u$ defined on an open subset of $\mathbb R^2$ that contains the boundary of $D_1\setminus D_\gamma$. By multiplying with a cut-off function, one can write $u = u_{\mathrm s} +  u_{\mathrm r}$ such that $u_{\mathrm s}$ is smooth and $u_{\mathrm r}$ is $C^{1,1}$-regular as well as compactly supported. The standard mollification $u_{\mathrm r}^\varepsilon$ of $u_{\mathrm r}$ is smooth, compactly supported, and converges to $u_{\mathrm r}$ as $\varepsilon \to 0$ in the Sobolev space $W^{2,p}$ for all $1\leq p<\infty$. The immersions $f^\varepsilon$ corresponding to $u^\varepsilon:= u_{\mathrm s} + u_{\mathrm r}^\varepsilon$ are smooth and differ from $f_{s,\alpha}$ only on a small neighbourhood of the boundary of $D_1\setminus D_\gamma$. Moreover, there holds
	\begin{equation*}
		|\mathcal W(f^\varepsilon) - \mathcal W(f_{s,\alpha})| + |\iso(f^\varepsilon) - \iso(f_{s,\alpha})| \to 0\qquad \text{as $\varepsilon \to0$.}
	\end{equation*}	
	Hence there exists $\eta>0$  such that, for $\varepsilon>0$ small enough, $f^\varepsilon$ satisfies the following quantified version of  \eqref{eq:intro:strict_inequality}: 
	$$
	\mathcal W(f^\varepsilon)< \mathcal W(f_1) +   \mathcal W(f_2) -4 \pi - \eta.
	$$
	Finally, we  once again apply Lemma \ref{lem:initial_mcf} away from the support of $u^\varepsilon_{\mathrm r}$ to re-establish \eqref{eq:intro:iso_constraint} still keeping the validity of  \eqref{eq:intro:strict_inequality} .

\end{proof}

{
\footnotesize
\bibliographystyle{alpha}
\bibliography{mybib}
}
\end{document}